\theoremstyle{plain}
 \newtheorem{theorem}{Theorem}[section]
 \newtheorem{lemma}{Lemma}[section]
 \newtheorem{proposition}{Proposition}[section]
 \newtheorem{corollary}{Corollary}[section]
\theoremstyle{definition}
 \newtheorem{remark}[theorem]{Remark}
\def\mcalL{\mathcal{L}}
\def\mcalM{\mathcal{M}}
\def\mcalS{\mathcal{S}}
\def\mcalF{\mathcal{F}}
\def\mcalG{\mathcal{G}}
\def\mcalO{\mathcal{O}}
\def\mcalT{\mathcal{T}}
\def\mbbC{\mathbb{C}}
\def\mbbR{\mathbb{R}}
\def\mbbZ{\mathbb{Z}}
\def\mbbQ{\mathbb{Q}}
\def\mbbP{\mathbb{P}}
\def\Gal{\mathrm{Gal}}
\DeclareMathOperator{\Pic}{Pic}
\DeclareMathOperator{\Mor}{Mor}
\DeclareMathOperator{\End}{End}
\DeclareMathOperator{\Hom}{Hom}
\DeclareMathOperator{\identity}{id}
\DeclareMathOperator{\Impart}{Im}
\DeclareMathOperator{\Repart}{Re}
\DeclareMathOperator{\Trace}{Tr}
\DeclareMathOperator{\vol}{vol}
\DeclareMathOperator{\rank}{rank}
\DeclareMathOperator{\Ker}{Ker}
\DeclareMathOperator{\diag}{diag}
\DeclareMathOperator{\can}{can}
\title{On the minimal degree of morphisms between algebraic curves}
\author{Roland Paulin}
\address{Roland Paulin, Department of Mathematics, University of Salzburg, Hellbrunnerstr.\ 34/I, 5020 Salzburg, Austria}
\email{paulinroland@gmail.com}
\thanks{The author was supported by the Austrian Science Fund (FWF): P24574.}
\subjclass[2010]{Primary 11G30; Secondary 11G05, 11G10, 14H40, 14K02, 14K15}
\keywords{minimal degree, morphism of curves, modular degree, elliptic curve, abelian variety, isogeny estimate, Faltings height, abc conjecture}
\date{\today}
\begin{document}

\begin{abstract}
Given smooth, projective, geometrically integral algebraic curves $X$ and $Y$ defined over a number field $K$, assuming that there is a non-constant $K$-morphism $\varphi \colon X \to Y$, we give an upper bound on the minimum of the degrees of such morphisms.
The proof is based on isogeny estimates between abelian varieties.
\end{abstract}

\maketitle

\section{Introduction} \label{sec:Introduction}

We study the following problem.
Given two smooth projective curves defined over a number field $K$, can one bound from above the minimum of the degrees of non-constant morphisms defined over $K$ between the two curves?
The principal motivation to study this problem comes from the abc conjecture.

We first recall the modular degree conjecture, which is very closely related to the abc conjecture.
Let $E/\mbbQ$ be an elliptic curve, then the work of Wiles and others (\cite{Wiles}, \cite{BrCoDiTa}) shows that there is a non-constant $\mbbQ$-morphism $\phi$ from the modular curve $X_0(N)$ onto $E$, where $N$ is the conductor (see \cite[p.\ 256]{Sil}) of $E$.
Such a morphism $\phi$ is called a modular parametrization of $E$.
A version of the modular degree conjecture (see \cite[p.\ 178]{Murty:Bounds}) states that every elliptic curve $E/\mbbQ$ has such a modular parametrization of degree $O(N^{2+\varepsilon})$ for every $\varepsilon > 0$.
It is proved in \cite[Theorem 1]{Murty:Bounds} that this implies the abc conjecture, and conversely, the abc conjecture implies the modular degree conjecture for all Frey curves $E/\mbbQ$.
In fact, \cite[Corollary 3.1]{Frey:ternary} describes an equivalence between the abc conjecture and a slightly modified version of the modular degree conjecture.
So clearly the minimal degree is of great interest in the case of morphisms from modular curves to elliptic curves.

We give a general explicit upper bound for the minimal degree of morphisms of curves defined over a number field $K$, where, if the curves have a $K$-rational point, then the bound only depends on the genus of the curves, the degree of the number field, and the Faltings height of the Jacobians of the curves.
The proof uses isogeny estimates and endomorphism estimates for abelian varieties over number fields.
These were originally developed by Masser and W\"ustholz (see \cite{MW:IsogEst}, \cite{MW:EndoEst}, \cite{MW:FactEst}), and recently refined by Gaudron and R\'emond (see \cite{GauRe}, \cite{GauRePeriod}).

Let $X$ and $Y$ be smooth geometrically integral projective curves defined over a number field $K$, and assume that there is a non-constant $K$-morphism $X \to Y$.
Let
\[
\mu_K(X,Y) = \min \{\deg f ; \, f \colon X \to Y \textrm{ is a non-constant } K \textrm{-morphism} \}.
\]
For us the most interesting case will be when the genus of $Y$ is $g(Y) = 1$.

Before we can state our main result, we need to introduce some notions.
Let $i(X)$ denote the index of the curve $X$, which is the smallest positive integer $m$ such that there is a divisor (defined over $K$) on $X$ of degree $m$.
Let $p(X)$ denote the period of the curve $X$, which is the smallest positive integer $m$ such that there is a divisor $D$ of degree $m$ on $X_{\overline{K}}$, such that $\sigma D$ is linearly equivalent to $D$ for every $\sigma \in \Gal(\overline{K}/K)$.
Equivalently, $p(X)$ is the smallest positive integer $m$ such that there is an invertible sheaf $\mcalL$ of degree $m$ on $X_{\overline{K}}$ such that $\sigma \mcalL \cong \mcalL$ for every $\sigma \in \Gal(\overline{K}/K)$.
(Remark: In general this does not imply that $\mcalL$ is defined over $K$!)
It is easy to see that $p(X) \mid i(X)$.
For more on these notions, see e.g., \cite{LaTa}, \cite{Licht} or \cite{Cl}.

For an abelian variety $A$ of dimension $g \ge 1$ defined over the number field $k$, \cite[p.\ 2058]{GauRe} introduces the quantity
\[
\kappa(A) = \left((14g)^{64 g^2} [k:\mbbQ] \max(h_F(A), \log[k:\mbbQ], 1)^2 \right)^{2^{10} g^3},
\]
where $h_F(A)$ denotes the stable Faltings height of $A$ (with the original normalization of Faltings, see \cite[paragraph 2.3]{GauRePeriod}).
Using this notation, we can finally state our main result.

\begin{theorem} \label{thm:Faltings height}
If $g(Y) = 1$ and there is a non-constant $K$-morphism $X \to Y$, then
\[
\mu_K(X, Y) \le p(X)^2 \kappa(J(X))^3.
\]
\end{theorem}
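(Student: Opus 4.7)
The plan is to identify the existence of a $K$-morphism $X \to Y$ with the existence of a short vector in an affine sub-lattice of $\Hom_K(J(X), J(Y))$, and to apply the quantitative isogeny estimates of Gaudron--R\'emond to control that lattice.

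First, I would set up the correspondence between morphisms and $\Hom$-lattice elements. Pushing forward line bundles, any $K$-morphism $\phi \colon X \to Y$ induces a $K$-morphism of $J(Y)$-torsors $\tilde\phi \colon \Pic^1_{X/K} \to \Pic^1_{Y/K}$ equivariant under $\beta := \phi_* \colon J(X) \to J(Y)$, and via the canonical isomorphism $\Pic^1_{Y/K} \cong Y$ (valid because $g(Y) = 1$), one recovers $\phi$ by restriction to the Abel--Jacobi embedding $\iota_1 \colon X \hookrightarrow \Pic^1_{X/K}$. Conversely, such a $\tilde\phi$ exists for a given $\beta$ if and only if $\beta_*[\Pic^1_{X/K}] = [\Pic^1_{Y/K}]$ in $H^1(K, J(Y))$, and in that case the resulting $\phi_\beta$ is unique up to $J(Y)(K)$-translation. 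Since $p(X) \cdot [\Pic^1_{X/K}] = 0$ in $H^1(K, J(X))$, the set of compatible $\beta$ contains
\[
\Lambda := \varphi_* + p(X) \cdot \Hom_K(J(X), J(Y)) \subset \Hom_K(J(X), J(Y)).
\]

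Next, I would compute the degree. An intersection-theoretic argument on $J(X)$ gives $\deg\phi_\beta = \iota_1(X) \cdot \ker\beta$ (a translation-invariant intersection number), and using the principal polarization on $J(X)$ one bounds this intersection by a product of $p(X)$ and the ``polarized degree'' of $\beta$. Finally, I would invoke the Gaudron--R\'emond isogeny/endomorphism estimates to produce a reduced basis of $\Hom_K(J(X), J(Y))$ with generators of polarized degree at most $\kappa(J(X))^3$; a covering-radius argument in the coset $\Lambda$ then yields $\beta \in \Lambda$ of polarized degree at most $p(X) \cdot \kappa(J(X))^3$. Combining, $\mu_K(X, Y) \le \deg\phi_\beta \le p(X)^2 \cdot \kappa(J(X))^3$.

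The main obstacle will be the last step: extracting the Gaudron--R\'emond estimates in the precise form needed—a reduced basis of $\Hom_K(J(X), J(Y))$ with polarized degrees bounded by $\kappa(J(X))^3$—and implementing the short-vector argument in the affine coset $\Lambda$ to gain exactly one further factor of $p(X)$. A secondary difficulty is the intersection calculation, where one must verify that the restriction of the principal polarization to a complementary elliptic subvariety of $J(X)$ contributes the remaining factor of $p(X)$.
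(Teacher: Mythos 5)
Your overall architecture matches the paper's: identify $K$-morphisms $X \to Y$ with a coset $U$ of a subgroup $H \supseteq p(X)\,\Hom_K(J(X),J(Y))$, express the degree as a positive definite quadratic form on $\Omega = \Hom_K(J(X),J(Y))$ (your intersection number is the same quantity as $\tfrac12\Trace(J(\varphi)\circ J(\varphi)^{\dag})$), and then find a short vector in the coset. However, your accounting of the two factors of $p(X)$ is off. The degree formula carries no factor of $p(X)$: one has exactly $\deg\varphi = \tfrac12|J(\varphi)|^2$, and there is no mechanism by which the restriction of the polarization to a complementary subvariety would contribute $p(X)$. Both factors come from the coset structure alone: since $p(X)\Omega \subseteq H$, a covering-radius argument produces a nonzero $u \in U$ with $|u| \le p(X) R(\Omega)$, and squaring yields $p(X)^2$. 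Your version, which seeks $\beta$ in the coset of polarized degree $p(X)\kappa^3$ plus an extra $p(X)$ from the intersection computation, does not close: the polarized degree is quadratic in $\beta$, so translating by $p(X)\Omega$ costs $p(X)^2$, not $p(X)$. (The total exponent happens to come out right, but for the wrong reason.)

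The more serious gap is the step ``invoke Gaudron--R\'emond to produce a reduced basis of $\Hom_K(J(X),J(Y))$ with generators of polarized degree at most $\kappa(J(X))^3$.'' No such statement is available off the shelf: their theorems bound minimal degrees of isogenies between abelian varieties, not norms of generators of Hom-lattices, and converting one into the other is the real content of the proof. Concretely one must (i) compute $\vol(\Omega)$ exactly, which requires identifying the maximal subvariety $A = \alpha(E^r) \subseteq J(X)$ isogenous to a power of $E = J(Y)$, analyzing $\Hom_K(E,A)$ as a module over $\mathcal{O} = \End_K(E)$, and expressing the covolume in terms of $\chi(A,\mathcal{F}_A)\rho_{E^r,A}$ and the discriminant $|D(\mathcal{O})|$; (ii) bound $\chi(A,\mathcal{F}_A)\rho_{E^r,A} \le \rho_{E^r\times B, J(X)} \le \kappa(J(X))$ by the isogeny theorem applied to $E^r \times A^{\perp} \to J(X)$; (iii) bound $|D(\mathcal{O})|$ via the Faltings height of $E$, which must itself be controlled by $h_F(J(X))$ through the isogeny $E^r \to A$; and (iv) pass from the covolume to $\lambda(\Omega)$ and $R(\Omega)$ by Minkowski-type inequalities, using that every nonzero $\omega \in \Omega$ satisfies $|\omega| \ge 1$. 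Without the covolume computation and the discriminant bound, the claimed bound on short vectors of $\Omega$ is unsupported, and this is precisely where the exponent $3$ on $\kappa(J(X))$ is earned.
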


Let us briefly discuss what happens if $g(Y) \neq 1$.
We still assume that there is a non-constant $K$-morphism from $X$ to $Y$.
If $g(Y) > 1$, then every non-constant morphism from $X$ to $Y$ has degree at most $\frac{g(X)-1}{g(Y)-1} \le g(X)-1$, so in particular $\mu_K(X,Y) \le g(X)-1$.
This immediately follows from Hurwitz's theorem, which says that
\[
2g(X)-2 = (\deg \varphi) (2g(Y)-2) + \deg R,
\]
where $R$ is the ramification divisor of $\varphi$, and $\deg R \ge 0$.

We will show now that if $g(Y) = 0$ and $X(K) \neq \varnothing$, then $\mu_K(X,Y) \le g(X)+1$.
Note that $Y(K) \neq \varnothing$ (because there is a $K$-morphism from $X$ to $Y$), so $Y \cong \mbbP^1_K$.
Let $P \in X(K)$, then $D = (g(X)+1) P$ is a divisor on $X$ of degree $g(X)+1$.
Then by the Riemann-Roch theorem, we have $\dim_K(H^0(X, \mcalL(D))) \ge \deg(D) - g(X) + 1 = 2$.
Here $\mcalL(D)$ is an invertible sheaf over $X$.
The constant $1$ function is in $H^0(X, \mcalL(D))$.
There is a non-constant section $f \in H^0(X, \mcalL(D))$ too.
Then $f$ defines a non-constant $K$-morphism $X \to \mbbP^1_K$, with degree at most $\deg(D) = g(X)+1$.

Here is a brief outline of the paper.
In section 2, we discuss some basic results about curves, abelian varieties, polarizations, endomorphisms of abelian varieties, and Rosati norms of homomorphisms.
In section 3, we express the minimal degree $\mu_K(X,Y)$ using the Rosati norm of homomorphisms of the Jacobians, reducing the problem to understanding the lattice $\Hom_K(J(X),J(Y))$.
In fact, it is enough to calculate the covolume of this lattice, which is done in section 4.
Finally, in section 5 we prove the bound of Theorem \ref{thm:Faltings height} for $\mu_K(X,Y)$.

\section{Preliminaries}

We introduce some notations.
Throughout this section, $K$ denotes a field of characteristic zero, and $\overline{K}$ denotes an algebraic closure of $K$.
If $A$ is an abelian variety over $K$, and $m \in \mbbZ$, then $[m]_A \colon A \to A$ denotes the multiplication by $m$ map.
The dual abelian variety of $A$ is denoted by $\widehat{A}$.
If $f \colon A \to B$ is a homomorphism of abelian varieties, then $\widehat{f} \colon \widehat{B} \to \widehat{A}$ is the dual homomorphism.

Let $A$ be an abelian variety over $K$.
For every invertible sheaf $\mcalL$ on $A$, we define a $K$-homomorphism $\phi_{\mcalL} \colon A \to \widehat{A}$, $x \mapsto \tau_x^* \mcalL \otimes \mcalL^{-1}$, where $\tau_x \colon A \to A$ is the translation by $x$.
Then $\deg(\phi_{\mcalL}) = \chi(A, \mcalL)^2$ (see \cite[Corollary 3.6.2]{BirLa}).
Thus $\phi_{\mcalL}$ is an isogeny if and only if $\mcalL$ is nondegenerate, so e.g., if $\mcalL$ is ample (see part \eqref{item:prop:basics, h^i} of Proposition \ref{prop:basics} below).

A polarization of $A$ is a $K$-isogeny $\epsilon \colon A \to \widehat{A}$ such that the base change $\epsilon_{\overline{K}}$ is of the form $\phi_{\mcalL}$ for an ample invertible sheaf $\mcalL$ on $A_{\overline{K}}$.
We say in this case that $(A, \mcalL)$ is a polarized abelian variety.
By abuse of notation, we often talk about the polarization $\mcalL$ instead of $\phi_{\mcalL}$.
(Note that $\mcalL$ is usually not uniquely determined by $\phi_{\mcalL}$.)
We say that $\mcalL$ is a principal polarization, if $\phi_{\mcalL}$ is an isomorphism.

Let $(A, \mcalL)$ and $(B, \mcalM)$ be polarized abelian varieties over $K$.
The Rosati involution
\[
\cdot^{\dag} \colon \Hom_K(A,B) \otimes_{\mbbZ} \mbbQ \to \Hom_K(B,A) \otimes_{\mbbZ} \mbbQ
\]
is defined by $f \mapsto f^{\dag} = \phi_{\mcalL}^{-1} \circ \widehat{f} \circ \phi_{\mcalM}$.
(Note that this does not depend on the exact value of $\mcalL$ and $\mcalM$, only on $\phi_{\mcalL}$ and $\phi_{\mcalM}$.)
There is a so called Rosati metric on $\Hom_K(A,B) \otimes_{\mbbZ} \mbbR$ defined in \cite{GauRe}.
This is induced by the quadratic form $q_{(A,B)} = q_{(\mcalL, \mcalM)}$, where $q_{(\mcalL,\mcalM)}(f) = \Trace(f^{\dag} \circ f) = \Trace(f \circ f^{\dag})$ for every $f \in \Hom_K(A,B)$.
(For the definition of the trace map $\Trace \colon \End_K(A) \otimes_{\mbbZ} \mbbR \to \mbbR$, see \cite[D\'efinition 2.1]{GauRe}.)
The norm of an element $f \in \Hom_K(A,B)$ is $|f| = |f|_{(\mcalL, \mcalM)} = \sqrt{q_{(\mcalL, \mcalM)}(f)}$.
The Rosati involution is clearly an isometry.
Note that if $L/K$ is a field extension, then the canonical map $\Hom_K(A,B) \otimes_{\mbbZ} \mbbR \to \Hom_L(A,B) \otimes_{\mbbZ} \mbbR$ is injective, and the Rosati metric on $\Hom_K(A,B) \otimes_{\mbbZ} \mbbR$ is the restriction of the Rosati metric on $\Hom_L(A,B) \otimes_{\mbbZ} \mbbR$.

The following proposition collects together some basic results about invertible sheaves on curves and on abelian varieties.

\begin{proposition} \label{prop:basics}
\begin{enumerate}
\item \label{item:prop:basics, deg=h^0}
If $E$ is an elliptic curve over $K$, and $\mcalL$ is an ample invertible sheaf on $E$, then $\deg \mcalL = h^0(E, \mcalL)$.

\item \label{item:prop:basics, deg(f^*L)}
If $X$ and $Y$ are integral smooth projective curves over $K$, $f \colon X \to Y$ is a non-constant $K$-morphism, and $\mcalL$ is an invertible sheaf on $Y$, then $\deg(f^* \mcalL) = \deg f \cdot \deg \mcalL$.

\item \label{item:prop:basics, isogeny}
If $f \colon A \to B$ is an isogeny of abelian varieties, and $\mcalL$ is an invertible sheaf on $B$, then $\chi(A, f^* \mcalL) = \deg f \cdot \chi(B, \mcalL)$.

\item \label{item:prop:basics, h^i}
If $A$ is an abelian variety over $K$, and $\mcalL$ is an ample invertible sheaf on $A$, then $\chi(A, \mcalL) = h^0(A, \mcalL) > 0$ and $h^i(A, \mcalL) = 0$ for every $i > 0$.

\item \label{item:prop:basics, finite ker}
If $f \colon A \to B$ is a homomorphism of abelian varieties such that $\Ker f$ is finite, and $\mcalL$ is an ample invertible sheaf on $B$, then $f^* \mcalL$ is an ample invertible sheaf on $A$.
\end{enumerate}
\end{proposition}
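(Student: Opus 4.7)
The plan is to treat the five items separately, each with a classical tool; none should require more than standard Riemann-Roch or Mumford's theory of line bundles on abelian varieties.

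For (1), I would apply Riemann-Roch on $E$: since $\omega_E \cong \mcalO_E$, Serre duality identifies $h^1(E,\mcalL)$ with $h^0(E,\mcalL^{-1})$, which vanishes because $\mcalL$ ample forces $\deg \mcalL > 0$, and thus $h^0(E,\mcalL) = \deg \mcalL$. For (2), write $\mcalL \cong \mcalO_Y(D)$ and expand $f^*D$ using ramification indices, whose sum over any fibre equals $\deg f$. For (4), invoke Mumford's vanishing theorem for ample line bundles on abelian varieties, which gives $h^i(A,\mcalL) = 0$ for all $i > 0$ and hence $\chi(A,\mcalL) = h^0(A,\mcalL)$; the positivity then follows because an ample sheaf on a projective variety has global sections. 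For (5), observe that a homomorphism of abelian varieties with finite kernel is automatically proper and quasi-finite, hence finite, and recall that the pullback of an ample sheaf along a finite morphism is ample.

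The main obstacle is (3). The approach I have in mind is to combine $\phi_{f^*\mcalL} = \widehat{f} \circ \phi_{\mcalL} \circ f$, which immediately yields $\deg \phi_{f^*\mcalL} = (\deg f)^2 \deg \phi_{\mcalL}$, with the identity $\deg \phi_{\mcalM} = \chi(A,\mcalM)^2$ already recalled in the text. This gives $\chi(A, f^*\mcalL) = \pm (\deg f) \cdot \chi(B,\mcalL)$, and the sign has to be pinned down. The cleanest way to do this is to note that $f^*$ preserves the index of a nondegenerate line bundle (equivalently, the Hermitian form on the tangent space at $0$ pulls back without change of signature), so $\chi(A, f^*\mcalL)$ and $\chi(B,\mcalL)$ carry the same sign; alternatively one can simply quote the formula directly from \cite{BirLa} or Mumford's \emph{Abelian Varieties}. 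This sign bookkeeping is the only step that feels non-routine.
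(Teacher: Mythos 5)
Your overall plan is sound and, unlike the paper's proof (which for items (1)--(4) consists entirely of citations to Liu, Birkenhake--Lange, Mumford and Edixhoven--Moonen--van der Geer), you actually sketch arguments. Items (1), (2) and (5) are fine: for (5) your route (finite kernel implies quasi-finite, plus properness, implies finite; pullback of ample along a finite morphism is ample) is essentially the paper's argument, which factors $f$ through its image as an isogeny followed by a closed immersion and then invokes \cite[Proposition 5.1.12]{EGAII}. For (3), your derivation from $\deg \phi_{\mcalM} = \chi(\mcalM)^2$ and $\phi_{f^*\mcalL} = \widehat{f} \circ \phi_{\mcalL} \circ f$ does work: the sign is pinned down by the index theorem (the sign of $\chi$ of a nondegenerate bundle is $(-1)^{i(\mcalL)}$, and an isogeny induces an isomorphism on tangent spaces, so the signature of the Hermitian form, hence the index, is preserved), and in the degenerate case both Euler characteristics vanish. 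This is more work than the paper's one-line citation, but it is a legitimate alternative.

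The one genuine gap is in your treatment of (4). The claim that ``an ample sheaf on a projective variety has global sections'' is false in general: a generic line bundle of degree $1$ on a curve of genus $2$ is ample (positive degree) but has $h^0 = 0$. So the positivity $h^0(A,\mcalL) > 0$ cannot be deduced from ampleness alone; it is a genuinely abelian-variety-specific fact. The repair is short: once Mumford's vanishing theorem gives $h^i(A,\mcalL) = 0$ for $i > 0$, you have $\chi(A,\mcalL) = h^0(A,\mcalL) \ge 0$, and $\chi(A,\mcalL) \neq 0$ follows from $\chi(A,\mcalL)^2 = \deg \phi_{\mcalL} > 0$, since $\phi_{\mcalL}$ is an isogeny for ample $\mcalL$ (finiteness of $K(\mcalL)$). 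Alternatively, Riemann--Roch on abelian varieties gives $\chi(A,\mcalL) = (\mcalL^{\cdot g})/g!$, which is positive for ample $\mcalL$. With that substitution your proof of (4), and hence the whole proposition, is complete.
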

\begin{proof}
\eqref{item:prop:basics, deg=h^0}:
See Proposition 5.5, Lemma 3.30 and Example 3.35 in \cite[Chapter 7]{Liu}.

\eqref{item:prop:basics, deg(f^*L)}:
This follows from Proposition 3.8 of \cite[Chapter 7]{Liu}.

\eqref{item:prop:basics, isogeny}:
See \cite[Corollary 3.6.6]{BirLa} or \cite[(9.12)]{EMvdG}.

\eqref{item:prop:basics, h^i}:
See \cite[p.\ 150]{Mum} or \cite[Chapter 9]{EMvdG}.

\eqref{item:prop:basics, finite ker}:
Let $A'$ be the image of $f$, then $f$ decomposes as $f = \iota_{A'} \circ g$, where $g \colon A \to A'$ is an isogeny and $\iota_{A'} \colon A' \to B$ is a closed immersion.
Both closed immersions and isogenies of abelian varieties are finite morphisms (see \cite[(5.2)]{EMvdG}), so $f$ is also finite.
Every finite morphism is affine (so in particular quasi-affine), hence $f^* \mcalL$ is ample by \cite[Proposition 5.1.12]{EGAII}.
\end{proof}

\begin{lemma} \label{lemma:phi_f^*F = f' phi f}
If $f \colon A \to B$ is a homomorphism of abelian varieties defined over $K$, and $\mcalF$ is an invertible sheaf on $B$, then $\phi_{f^* \mcalF} = \widehat{f} \circ \phi_{\mcalF} \circ f$.
\end{lemma}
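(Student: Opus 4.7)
The plan is to verify the identity pointwise on $\overline{K}$-points using the very definition of $\phi_{\mcalL}$, together with two simple inputs: that $f$ is a group homomorphism (so it intertwines translations), and the standard description of $\widehat{f}$ as pullback of line bundles on $\Pic^0$.

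First I would unwind the left-hand side. For $x \in A(\overline{K})$,
\[
\phi_{f^*\mcalF}(x) = \tau_x^*(f^*\mcalF) \otimes (f^*\mcalF)^{-1}.
\]
Since $f$ is a homomorphism, the relation $f \circ \tau_x = \tau_{f(x)} \circ f$ holds, so $\tau_x^* f^* \mcalF = f^* \tau_{f(x)}^* \mcalF$. Therefore
\[
\phi_{f^*\mcalF}(x) = f^*\bigl(\tau_{f(x)}^* \mcalF \otimes \mcalF^{-1}\bigr) = f^*\bigl(\phi_{\mcalF}(f(x))\bigr).
\]

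Next I would identify this with the right-hand side. The element $\phi_{\mcalF}(f(x))$ lies in $\widehat{B}(\overline{K}) = \Pic^0(B_{\overline{K}})$, and under the standard identification $\widehat{f} \colon \widehat{B} \to \widehat{A}$ is induced by pullback of degree-zero invertible sheaves along $f$. Hence $\widehat{f}(\phi_{\mcalF}(f(x))) = f^*(\phi_{\mcalF}(f(x)))$ as classes in $\Pic^0(A_{\overline{K}})$, and combining with the previous display yields $\phi_{f^*\mcalF}(x) = (\widehat{f} \circ \phi_{\mcalF} \circ f)(x)$. Since both $\phi_{f^*\mcalF}$ and $\widehat{f} \circ \phi_{\mcalF} \circ f$ are $K$-homomorphisms of abelian varieties that agree on $\overline{K}$-points, they agree as morphisms.

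There is no real obstacle here; the only subtlety worth stating carefully is the first equality, where one must invoke the theorem of the square (or equivalently the fact that the functor $\Pic^0$ represents the dual) to be sure that $f^*$ is additive on the relevant classes so that the identification of $\widehat{f}$ with $f^*$ on $\Pic^0$ is linear. Everything else is formal manipulation of pullbacks and translations.
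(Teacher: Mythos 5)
Your proof is correct and follows essentially the same route as the paper: both verify the identity on points via $f \circ \tau_x = \tau_{f(x)} \circ f$ and the description of $\widehat{f}$ as pullback of invertible sheaves. Nothing further is needed.
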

\begin{proof}
Both $\phi_{f^* \mcalF}$ and $\widehat{f} \circ \phi_{\mcalF} \circ f$ are $K$-homomorphisms $A \to \widehat{A}$, and if $x \in A$, then
\[
\phi_{f^* \mcalF}(x) = \tau_x^* f^* \mcalF \otimes (f^* \mcalF)^{-1} = (f \circ \tau_x)^* \mcalF \otimes f^* \mcalF^{-1}
\]
and
\[
(\widehat{f} \circ \phi_{\mcalF} \circ f)(x) = f^* (\tau_{f(x)}^* \mcalF \otimes \mcalF^{-1}) =  (\tau_{f(x)} \circ f)^* \mcalF \otimes f^* \mcalF^{-1}.
\]
The lemma follows from the identity $f \circ \tau_x = \tau_{f(x)} \circ f$, which is true, because $f \colon A \to B$ is a homomorphism.
\end{proof}

If $A$ is an abelian variety over $K$, then there is a canonical invertible sheaf on $A \times \widehat{A}$, called the Poincar\'e invertible sheaf (see \cite[p.\ 78]{Mum} or \cite[Section 2.5]{BirLa}).
Using the Poincar\'e invertible sheave one can define a canonical isomorphism $\can_A \colon A \to \widehat{\widehat{A}}$ (see \cite[II.8, Proposition 2]{Mum} or \cite[Chapter VII, (7.7) and (7.9)]{EMvdG}).

The following proposition describes the polarizations of an elliptic curve.

\begin{proposition} \label{prop:basics2}
Let $E$ be an elliptic curve over $K$.
Then $E$ has a unique principal polarization $\epsilon_E = \phi_{\mcalL([0_E])} \colon E \to \widehat{E}$, where $\mcalL([0_E])$ is the invertible sheaf on $E$ associated to the divisor $[0_E]$.
Furthermore, every polarization of $E$ has the form $m \epsilon_E$ for some positive integer $m$.
Moreover $\widehat{\epsilon_E} = (\epsilon_{\widehat{E}})^{-1}$ and $\epsilon_{\widehat{E}} \circ \epsilon_E = \can_E$.
\end{proposition}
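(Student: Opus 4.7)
First, I would verify that $\epsilon_E := \phi_{\mcalL([0_E])}$ is a principal polarization by a degree computation. Since $\mcalL([0_E])$ has degree $1$ and is ample on $E$, part~\eqref{item:prop:basics, deg=h^0} of Proposition~\ref{prop:basics} gives $h^0(E, \mcalL([0_E])) = 1$, and part~\eqref{item:prop:basics, h^i} gives $\chi(E, \mcalL([0_E])) = h^0(E, \mcalL([0_E])) = 1$. Hence $\deg \phi_{\mcalL([0_E])} = \chi(E, \mcalL([0_E]))^2 = 1$, so $\phi_{\mcalL([0_E])}$ is an isomorphism, and therefore a principal polarization.

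Next, to show that every polarization of $E$ has the form $m\epsilon_E$ for some positive integer $m$, I would work over $\overline{K}$. The assignment $\mcalL \mapsto \phi_{\mcalL}$ from $\Pic(E_{\overline{K}})$ to $\Hom(E_{\overline{K}}, \widehat{E}_{\overline{K}})$ is additive, and it vanishes on $\Pic^0(E_{\overline{K}})$ since every $\mcalN \in \Pic^0(E_{\overline{K}})$ satisfies $\tau_x^*\mcalN \cong \mcalN$ for all $x$ by definition. Because $\Pic(E_{\overline{K}})/\Pic^0(E_{\overline{K}}) \cong \mbbZ$ via degree and $\mcalL([0_E])$ represents the class of degree $1$, this forces $\phi_{\mcalL} = \deg(\mcalL) \cdot (\epsilon_E)_{\overline{K}}$ for every invertible sheaf $\mcalL$ on $E_{\overline{K}}$. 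For a polarization $\epsilon \colon E \to \widehat{E}$ one has $\epsilon_{\overline{K}} = \phi_{\mcalL}$ with $\mcalL$ ample, and ampleness on an elliptic curve is equivalent to $\deg \mcalL > 0$; hence $\epsilon_{\overline{K}} = n(\epsilon_E)_{\overline{K}}$ with $n = \deg \mcalL \ge 1$, and the injectivity of the base-change map $\Hom_K(E, \widehat{E}) \hookrightarrow \Hom_{\overline{K}}(E_{\overline{K}}, \widehat{E}_{\overline{K}})$ yields $\epsilon = n\epsilon_E$. Uniqueness of the principal polarization is immediate since only $m=1$ makes $m\epsilon_E$ an isomorphism.

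Finally, for the duality identities, I would apply Lemma~\ref{lemma:phi_f^*F = f' phi f} with $f = \epsilon_E$ and $\mcalF = \mcalL([0_{\widehat{E}}])$. Since $\epsilon_E$ is an isomorphism of abelian varieties sending $0_E$ to $0_{\widehat{E}}$, pullback of divisors gives $\epsilon_E^*\mcalL([0_{\widehat{E}}]) \cong \mcalL([0_E])$, and the lemma reads
\[
\epsilon_E = \phi_{\mcalL([0_E])} = \widehat{\epsilon_E} \circ \epsilon_{\widehat{E}} \circ \epsilon_E.
\]
Cancelling the isomorphism $\epsilon_E$ on the right yields $\widehat{\epsilon_E} \circ \epsilon_{\widehat{E}} = \identity_{\widehat{E}}$, hence $\widehat{\epsilon_E} = \epsilon_{\widehat{E}}^{-1}$. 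For $\epsilon_{\widehat{E}} \circ \epsilon_E = \can_E$ I would invoke the classical self-duality $\widehat{\phi_{\mcalL}} \circ \can_A = \phi_{\mcalL}$ valid for any invertible sheaf $\mcalL$ on an abelian variety $A$; applied to $E$ and $\mcalL([0_E])$ it reads $\widehat{\epsilon_E} \circ \can_E = \epsilon_E$, and substituting $\widehat{\epsilon_E} = \epsilon_{\widehat{E}}^{-1}$ gives $\can_E = \epsilon_{\widehat{E}} \circ \epsilon_E$. The only technical input not already in the excerpt is this symmetry property of $\phi_{\mcalL}$, a standard fact (cf.\ Mumford's \emph{Abelian Varieties}, II.8); everything else reduces to Proposition~\ref{prop:basics}, Lemma~\ref{lemma:phi_f^*F = f' phi f}, and the basic description of $\Pic^0$.
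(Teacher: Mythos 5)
Your proposal is correct and follows essentially the same route as the paper: verify $\deg\phi_{\mcalL([0_E])}=\chi(E,\mcalL([0_E]))^2=1$, show every $\phi_{\mcalL}$ is $\deg(\mcalL)\cdot\epsilon_E$, and derive the duality identities from Lemma~\ref{lemma:phi_f^*F = f' phi f} applied to $\epsilon_E^*\mcalL([0_{\widehat{E}}])\cong\mcalL([0_E])$ together with the symmetry $\widehat{\phi_{\mcalL}}\circ\can_E=\phi_{\mcalL}$. The only cosmetic difference is in the middle step, where you invoke translation-invariance of $\Pic^0$ (``by definition'') rather than the paper's explicit computation $\deg\phi_{\mcalG}=(\deg\mcalG)^2$, which shows $\phi_{\mcalG}=0$ exactly when $\deg\mcalG=0$; this is the same fact (that degree-zero sheaves on an elliptic curve are translation-invariant), just cited instead of proved, and your own first paragraph already contains the ingredients to prove it.
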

\begin{proof}
Let $\mcalG$ be an invertible sheaf on $E_{\overline{K}}$.
The genus of $E$ is $1 = 1 - \chi(E, \mcalO_E)$, so $\chi(E, \mcalO_E) = 0$, hence $\deg \mcalG = \chi(E, \mcalG) - \chi(E, \mcalO_E) = \chi(E, \mcalG)$ (see Definition 3.29 in \cite[Chapter 7]{Liu}).
Using $\deg \phi_{\mcalG} = \chi(E, \mcalG)^2 = (\deg \mcalG)^2$, we see that $\phi_{\mcalG} = 0$ if and only if $\deg \mcalG = 0$.

Let $\mcalF$ be an ample invertible sheaf on $E_{\overline{K}}$.
Then $m = \deg \mcalF$ is a positive integer (see Proposition 5.5 in \cite[Chapter 7]{Liu}), and $\deg \mcalG = 0$ for $\mcalG = \mcalF \otimes \mcalL([0_E])^{-m}$.
Hence $0 = \phi_{\mcalG} = \phi_{\mcalF} - m \phi_{\mcalL[0_E]} = \phi_{\mcalF} - m \epsilon_E$, so $\phi_{\mcalF} = m \epsilon_E$.
So every polarization of $E$ has the form $m \epsilon_E$ for some positive integer $m$.
Conversely, if $m$ is a positive integer, then $m \epsilon_E = \phi_{\mcalL(m [0]_E)}$ is a polarization, because $\mcalL(m [0]_E)$ is an ample invertible sheaf on $E$.
Furthermore $\deg(m \epsilon_E) = \deg(\phi_{\mcalL(m [0_E])}) = (\deg \mcalL(m [0_E]))^2 = m^2$, so $\epsilon_E$ is indeed the unique principal polarization of $E$.

Note that $\epsilon_E^*(\mcalL(0_{\widehat{E}})) = \mcalL(0_E)$, because $\epsilon_E$ is an isomorphism and $\epsilon_E(0_E) = 0_{\widehat{E}}$.
So using Lemma \ref{lemma:phi_f^*F = f' phi f} for the homomorphism $\epsilon_E \colon E \to \widehat{E}$ and the invertible sheaf $\mcalL(0_{\widehat{E}})$ on $\widehat{E}$, we obtain $\epsilon_E = \widehat{\epsilon_E} \circ \epsilon_{\widehat{E}} \circ \epsilon_E$, which implies $\widehat{\epsilon_E} = (\epsilon_{\widehat{E}})^{-1}$.

Using $\widehat{\epsilon_E} = (\epsilon_{\widehat{E}})^{-1}$ we see that the last identity of the proposition is equivalent to $\epsilon_E = \widehat{\epsilon_E} \circ \can_E$.
More generally $\phi_{\mcalF} = \widehat{\phi_{\mcalF}} \circ \can_E$ is true for every invertible sheaf $\mcalF$ on $E$ (see \cite[Corollary 2.4.6 (c)]{BirLa} or \cite[Chapter VII, (7.8)]{EMvdG}).
\end{proof}

The following Proposition describes the Rosati norm of endomorphisms of elliptic curves.

\begin{proposition} \label{prop:endoms of elliptic curves}
Let $(E, \mcalM)$ be an elliptic curve over $K$ with its canonical principal polarization.
Then $\mcalO = \End_K(E)$ is either $\mbbZ$ or an order in an imaginary quadratic number field.
Let $c^{\dag} = \phi_{\mcalM}^{-1} \circ \widehat{c} \circ \phi_{\mcalM} \in \mcalO$ for every $c \in \mcalO$.
Then $c^{\dag}$ is the complex conjugate of $c$, $\Trace c = c + c^{\dag} = 2 \Repart c$, and $\deg c = c c^{\dag} = |c|^2$.
The Rosati norm of $c \in \mcalO$ is $|c|_{(\mcalM, \mcalM)} = \sqrt{2} |c|$.
(Here $|c|$ denotes the complex absolute value of $c$.)
\end{proposition}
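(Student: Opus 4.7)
The plan is to establish the structure of $\mcalO$ first, then pin down $c^\dag$ by a direct computation with Lemma \ref{lemma:phi_f^*F = f' phi f}, then get the trace formula from the degree-two characteristic polynomial, and finally read off the Rosati norm.

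For the structure of $\mcalO$: in characteristic zero, $\End_K(E) \hookrightarrow \End_{\overline{K}}(E)$, and after base change to $\mbbC$ (fix any embedding $\overline{K} \hookrightarrow \mbbC$) the latter embeds into $\End_\mbbC(E_\mbbC)$, which is classically either $\mbbZ$ or an order in an imaginary quadratic field. A subring of such a ring has the same shape, proving the first assertion. In particular $\mcalO$ is commutative and an integral domain.

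For the identity $c c^\dag = \deg c$: applying Lemma \ref{lemma:phi_f^*F = f' phi f} to $f = c$ and $\mcalF = \mcalM$ gives $\phi_{c^*\mcalM} = \widehat{c} \circ \phi_{\mcalM} \circ c$. If $c \neq 0$, then $c$ is an isogeny, so $\Ker c$ is finite and $c^*\mcalM$ is ample by Proposition \ref{prop:basics}\eqref{item:prop:basics, finite ker}; moreover $\deg(c^*\mcalM) = \deg c$ since $\deg \mcalM = 1$. By Proposition \ref{prop:basics2}, $\phi_{c^*\mcalM} = m \epsilon_E = m \phi_\mcalM$ for some positive integer $m$, and comparing degrees via $\deg \phi_{c^*\mcalM} = \chi(E, c^*\mcalM)^2 = (\deg c)^2$ forces $m = \deg c$. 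Hence $\widehat{c} \phi_{\mcalM} c = (\deg c) \phi_{\mcalM}$, and applying $\phi_{\mcalM}^{-1}$ yields $c^\dag c = [\deg c]_E$; commutativity of $\mcalO$ then gives $c c^\dag = c^\dag c = \deg c$ (the $c = 0$ case is trivial). Now $\dag$ is a ring involution of $\mcalO$ fixing $\mbbZ$ and satisfying $c c^\dag \in \mbbZ$ for all $c$. If $\mcalO \otimes \mbbQ$ is an imaginary quadratic field $F$, then $\dag$ extends to a $\mbbQ$-algebra involution of $F$; since $c^\dag = \deg(c)/c$ for $c \neq 0$ is not $c$ in general (take $c \in \mcalO \setminus \mbbZ$, for which $c^2 \notin \mbbZ$), $\dag$ is nontrivial and must coincide with the unique nontrivial $\mbbQ$-automorphism of $F$, i.e.\ complex conjugation. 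If $\mcalO = \mbbZ$ the claim is vacuous. In either case $c^\dag = \bar c$, and $c c^\dag = c \bar c = |c|^2 = \deg c$.

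It remains to compute $\Trace c$ and the Rosati norm. The element $c$ acts on $H_1(E_\mbbC, \mbbQ) \cong \mbbQ^2$ (or equivalently on the rational $\ell$-adic Tate module $V_\ell E$), and the trace in the sense of \cite[D\'efinition 2.1]{GauRe} is exactly the trace of this two-dimensional rational representation, whose characteristic polynomial has the form $T^2 - (\Trace c) T + \deg c$. The Cayley--Hamilton identity $c^2 - (\Trace c) c + \deg c = 0$ in $\mcalO$ rewrites as $(\Trace c)\, c = c^2 + c c^\dag = c(c + c^\dag)$, and since $\mcalO$ is an integral domain we may cancel $c$ (when $c \neq 0$; for $c = 0$ the formula is trivial) to obtain $\Trace c = c + c^\dag = c + \bar c = 2 \Repart c$. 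Finally,
\[
|c|_{(\mcalM,\mcalM)}^2 = \Trace(c^\dag c) = \Trace([\deg c]_E) = 2 \deg c = 2 |c|^2,
\]
where we used $\Trace([m]_E) = 2m$ (the identity case of the formula just proved, or $\dim_\mbbQ V_\ell E = 2$). Taking square roots gives $|c|_{(\mcalM,\mcalM)} = \sqrt{2}\,|c|$. The main subtlety here is matching the trace convention of \cite{GauRe} with the rational/Tate representation so that $\Trace [m]_E = 2m$; once this normalization is confirmed, everything else is a short algebraic manipulation.
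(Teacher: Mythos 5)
Your proof is correct in substance but reaches the key identities by a genuinely different route from the paper. For $c^{\dag} = \overline{c}$, the paper never computes $\phi_{c^*\mcalM}$: it observes that the Rosati involution and complex conjugation are both ring involutions of $\mcalO$, deduces from the minimal polynomial of $\tau$ that $\dag$ is either the identity or conjugation, and kills the identity case using positivity of the trace form ($\Trace(ss^{\dag}) \ge 0$ applied to $s$ with $s^2 = -d$). You instead first prove $c^{\dag}c = [\deg c]_E$ directly from Lemma \ref{lemma:phi_f^*F = f' phi f} and Proposition \ref{prop:basics2}, and only then identify $\dag$ with conjugation; the paper goes the other way, deriving $\deg c = c c^{\dag} = |c|^2$ \emph{from} $c^{\dag} = \overline{c}$ via $(\deg c)^2 = \deg(c c^{\dag}) = |c|^4$. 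For the trace, the paper stays entirely inside the formal properties listed in \cite{GauRe} ($\Trace(c^{\dag}) = \Trace(c)$, linearity, and $\Trace([m]_E) = 2m$), computing $2\Trace c = \Trace(c + c^{\dag}) = \Trace(2\Repart c) = 4 \Repart c$; you go through the characteristic polynomial on $V_{\ell}E$ and Cayley--Hamilton, which is fine but imports the identification of Gaudron--R\'emond's trace with the trace of the two-dimensional rational representation -- a true fact, but one you correctly flag as needing verification, whereas the paper's route needs only the properties it explicitly cites.

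One small slip: to show $\dag$ is nontrivial you say ``take $c \in \mcalO \setminus \mbbZ$, for which $c^2 \notin \mbbZ$,'' but this fails for $c = \sqrt{-d}$ (there $c^2 = -d \in \mbbZ$). The conclusion still holds: either choose $c = 1 + \tau$ (whose square is non-real), or simply note that $c^{\dag} = c$ would give $c^2 = \deg c \ge 0$, forcing $c$ to be real and hence in $\mbbZ$. So the gap is cosmetic and immediately repairable, but as written the justification is not valid for every $c \in \mcalO \setminus \mbbZ$.
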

\begin{proof}
Note that $\mcalO$ is a subring of $\End_{\mbbC}(E)$, so the first part follows from \cite[Ch.\ VI, Theorem 5.5]{Sil}.
The complex conjugation and the Rosati involution are both automorphisms of the ring $\mcalO$, and they are both involutions.
If $\mcalO = \mbbZ$, then the only such involution is $\identity_{\mbbZ}$.
So suppose that $\mcalO = \mbbZ + \mbbZ \tau$, where $\tau \in \mbbC$ is an imaginary quadratic integer.
Then $\tau$ is a root of a polynomial $X^2 + uX + v \in \mbbZ[X]$.
Here $X^2 + uX + v = (X-\tau)(X-\overline{\tau})$, and $(\tau^{\dag})^2 + u \tau^{\dag} + v = 0$, so $\tau^{\dag} \in \{\tau, \overline{\tau}\}$.
Thus the Rosati involution is either the identity or the complex conjugation on $\mcalO$.
Suppose indirectly that it is the identity.
There is an element $s \in \mcalO$ such that $s^2 = -d$ for some positive integer $d$.
Then
\[
0 \le |s|_{(\mcalM, \mcalM)}^2 = \Trace(s s^{\dag}) = \Trace(-d) = -2d < 0,
\]
contradiction (here we have used \cite[Propri\'et\'e 2.2 (4)]{GauRe}).
So $c^{\dag} = \overline{c}$ for every $c \in \mcalO$.

Using $\Trace(c^{\dag}) = \Trace c$ and $\deg(c^{\dag}) = \deg c$ (see the paragraphs after D\'efinition 2.4 in \cite{GauRe}), we obtain that $2 \Trace c = \Trace(c + c^{\dag}) = \Trace(2 \Repart c) = 4 \Repart c$ and $(\deg c)^2 = \deg(c c^{\dag}) = \deg(|c|^2) = |c|^4$.
Thus $\Trace c = 2 \Repart c = c + c^{\dag}$ and $\deg c = |c|^2 = c c^{\dag}$.
Finally, $|c|_{(\mcalM, \mcalM)}^2 = \Trace(c c^{\dag}) = \Trace(|c|^2) = 2 |c|^2$.
\end{proof}

The following two lemmas describe some basic properties of the Rosati norm.

\begin{lemma} \label{lemma:|gf| and |f|}
Let $(A, \mcalL)$ and $(C, \mcalM)$ be polarized abelian varieties over $K$, let $B$ be an abelian variety over $K$, and let $f \in \Hom_K(A,B)$ and $g \in \Hom_K(B,C)$.
If $\Ker g$ is finite, then $g^* \mcalM$ is a polarization of $B$, and $|g \circ f|_{(\mcalL,\mcalM)} = |f|_{(\mcalL,g^* \mcalM)}$.
\end{lemma}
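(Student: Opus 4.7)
The statement splits into two claims: that $g^*\mcalM$ is a polarization of $B$, and that the two Rosati norms agree. The plan is to handle the first by combining Proposition~\ref{prop:basics}\eqref{item:prop:basics, finite ker} with Lemma~\ref{lemma:phi_f^*F = f' phi f}, and then to observe that the norm identity is forced by the definitions, since $(g\circ f)^{\dag}\circ(g\circ f)$ and $f^{\ddag}\circ f$ turn out to be literally the same element of $\End_K(A)\otimes\mbbQ$ (where $\ddag$ denotes the Rosati adjoint with respect to $(\mcalL, g^*\mcalM)$).

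For the first claim, I would base change to $\overline{K}$, where by definition $\phi_{\mcalM}$ has the form $\phi_{\mcalN}$ for some ample sheaf $\mcalN$ on $C_{\overline{K}}$. Since $\Ker g$ is finite, Proposition~\ref{prop:basics}\eqref{item:prop:basics, finite ker} makes $g_{\overline{K}}^*\mcalN$ ample on $B_{\overline{K}}$, and Lemma~\ref{lemma:phi_f^*F = f' phi f} identifies
\[
\phi_{g_{\overline{K}}^*\mcalN} = \widehat{g}_{\overline{K}} \circ \phi_{\mcalN} \circ g_{\overline{K}} = \bigl(\widehat g \circ \phi_{\mcalM} \circ g\bigr)_{\overline{K}}.
\]
Thus, setting $\phi_{g^*\mcalM} := \widehat g \circ \phi_{\mcalM} \circ g$ as a $K$-homomorphism $B \to \widehat B$, we get an isogeny (isogeny-ness can be checked after base change) arising from an ample sheaf over $\overline{K}$, which is by definition a polarization of $B$.

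For the norm identity, expanding the Rosati adjoints gives
\[
(g\circ f)^{\dag} \;=\; \phi_{\mcalL}^{-1}\circ \widehat f \circ \widehat g \circ \phi_{\mcalM}, \qquad f^{\ddag} \;=\; \phi_{\mcalL}^{-1}\circ \widehat f \circ \phi_{g^*\mcalM} \;=\; \phi_{\mcalL}^{-1}\circ \widehat f \circ \widehat g \circ \phi_{\mcalM}\circ g,
\]
so both $(g\circ f)^{\dag}\circ(g\circ f)$ and $f^{\ddag}\circ f$ are equal to $\phi_{\mcalL}^{-1}\circ\widehat f\circ\widehat g\circ\phi_{\mcalM}\circ g\circ f \in \End_K(A)\otimes\mbbQ$. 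Applying $\Trace$ and then taking square roots yields $|g\circ f|_{(\mcalL,\mcalM)} = |f|_{(\mcalL, g^*\mcalM)}$.

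I do not expect a substantive obstacle here; the identity is essentially forced by the way the Rosati involution is defined. The only place meriting care is the interpretation of $g^*\mcalM$ as a polarization rather than as a literal invertible sheaf over $K$ (the abuse of notation flagged in the text), and this is precisely what Proposition~\ref{prop:basics}\eqref{item:prop:basics, finite ker} together with Lemma~\ref{lemma:phi_f^*F = f' phi f} is designed to handle.
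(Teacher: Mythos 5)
Your proof is correct and takes essentially the same route as the paper's: the polarization claim comes down to the ampleness of $g^*\mcalM$ (the paper cites EGA II, Proposition 5.1.12 directly, which is the same fact packaged in Proposition \ref{prop:basics}\eqref{item:prop:basics, finite ker}), and the norm identity is the same one-line computation using $\widehat{g \circ f} = \widehat{f} \circ \widehat{g}$ together with $\phi_{g^*\mcalM} = \widehat{g} \circ \phi_{\mcalM} \circ g$ from Lemma \ref{lemma:phi_f^*F = f' phi f}.
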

\begin{proof}
Let $\Ker g$ be finite.
Then $g$ is finite, hence it is affine and therefore quasi-affine, so $g^* \mcalM$ is a polarization of $B$ (i.e., an ample invertible sheaf on $B$) by Proposition 5.1.12 of \cite{EGAII}.
Using the definition of the Rosati norm, we obtain
\begin{align*}
|g \circ f|_{(\mcalL,\mcalM)}^2 &= \Trace((g \circ f)^{\dag} \circ g \circ f) = \Trace(\phi_{\mcalL}^{-1} \circ \widehat{f} \circ \widehat{g} \circ \phi_{\mcalM} \circ g \circ f) \\
&= \Trace(\phi_{\mcalL}^{-1} \circ \widehat{f} \circ \phi_{g^* \mcalM} \circ f) = \Trace(f^{\dag} \circ f) = |f|_{(\mcalL,g^* \mcalM)}^2,
\end{align*}
because $\phi_{g^* \mcalM} = \widehat{g} \circ \phi_{\mcalM} \circ g$ by Lemma \ref{lemma:phi_f^*F = f' phi f}.
\end{proof}

\begin{lemma} \label{lemma:|f|^2=2h^0(f^*F)}
Let $(A, \mcalF)$ be a polarized abelian variety over $K$, and let $(E, \mcalM)$ be an elliptic curve over $K$, with its canonical principal polarization.
If $f \in \Hom_K(E,A)$, then
\[
|f|_{(\mcalM, \mcalF)}^2 = 2 \chi(E, f^* \mcalF).
\]
\end{lemma}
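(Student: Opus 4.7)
The plan is to unpack the definition of the Rosati norm and then reduce the composition $f^{\dag} \circ f$ to a multiplication map on $E$, whose trace we already understand from Proposition \ref{prop:endoms of elliptic curves}. The passage from degrees to Euler characteristics on $E$ is then a one-line consequence of the elliptic curve identity $\chi(E, -) = \deg(-)$ coming from $\chi(E, \mcalO_E) = 0$.

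The first step is to rewrite
\[
f^{\dag} \circ f = \phi_{\mcalM}^{-1} \circ \widehat{f} \circ \phi_{\mcalF} \circ f = \phi_{\mcalM}^{-1} \circ \phi_{f^* \mcalF},
\]
using Lemma \ref{lemma:phi_f^*F = f' phi f} applied to the homomorphism $f \colon E \to A$ and the sheaf $\mcalF$ on $A$. The second step is to observe, as in the proof of Proposition \ref{prop:basics2}, that on the elliptic curve $E$ the map $\mcalG \mapsto \phi_{\mcalG}$ is additive in $\mcalG$, vanishes exactly on degree zero sheaves, and agrees with $\phi_{\mcalM} = \epsilon_E$ on $\mcalL([0_E])$; consequently $\phi_{\mcalG} = (\deg \mcalG)\, \phi_{\mcalM}$ for \emph{every} invertible sheaf $\mcalG$ on $E$ (not just the ample ones). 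Setting $n = \deg(f^* \mcalF)$, this gives
\[
f^{\dag} \circ f = \phi_{\mcalM}^{-1} \circ (n\, \phi_{\mcalM}) = [n]_E.
\]

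The third step is to evaluate the trace: by Proposition \ref{prop:endoms of elliptic curves}, any integer $n \in \mbbZ \subseteq \End_K(E)$ satisfies $\Trace([n]_E) = 2\,\Repart(n) = 2n$, so
\[
|f|_{(\mcalM, \mcalF)}^2 = \Trace(f^{\dag} \circ f) = 2 \deg(f^* \mcalF).
\]
Finally, since $g(E) = 1$ implies $\chi(E, \mcalO_E) = 0$ and hence $\chi(E, \mcalG) = \deg \mcalG$ for any invertible sheaf $\mcalG$ on $E$ (as noted in the proof of Proposition \ref{prop:basics2}), we conclude $|f|_{(\mcalM, \mcalF)}^2 = 2 \chi(E, f^* \mcalF)$, as desired.

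There is no real obstacle here; the argument is a straightforward concatenation of the identities already established. The only point requiring a little care is the use of $\phi_{\mcalG} = (\deg \mcalG)\, \phi_{\mcalM}$ for a possibly non-ample $\mcalG = f^* \mcalF$ (in particular, $f^* \mcalF$ need not even be nonzero in degree if $f$ is zero, but then both sides of the claim are trivially zero), which is why I would explicitly note the additivity/degree argument rather than just quoting the ample case of Proposition \ref{prop:basics2}.
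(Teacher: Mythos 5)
Your proof is correct and follows essentially the same route as the paper: both reduce $f^{\dag}\circ f$ to a multiplication map $[m]_E$ with $m=\chi(E,f^*\mcalF)=\deg(f^*\mcalF)$ via Lemma \ref{lemma:phi_f^*F = f' phi f} and the classification of $\phi_{\mcalG}$ on an elliptic curve, then take the trace. The only cosmetic difference is that the paper splits off $f=0$ and invokes Lemma \ref{lemma:|gf| and |f|} together with the ample case of Proposition \ref{prop:basics2}, whereas you uniformize by extending $\phi_{\mcalG}=(\deg\mcalG)\,\phi_{\mcalM}$ to arbitrary invertible sheaves; both are fine.
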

\begin{proof}
This is trivial for $f = 0$, so suppose $f \neq 0$.
Then $\Ker f$ is finite, so $\mcalG = f^* \mcalF$ is a polarization of $E$ and $|f|_{(\mcalM, \mcalF)} = |\identity_E|_{(\mcalM, \mcalG)}$ by Lemma \ref{lemma:|gf| and |f|}.
By Proposition \ref{prop:basics2}, there is a positive integer $m$ such that $\phi_{\mcalG} = \phi_{\mcalM} \circ [m]_E$.
Then $m^2 = \deg(\phi_{\mcalG}) = \chi(E, \mcalG)^2$, where $\chi(E, \mcalG) > 0$ by part \eqref{item:prop:basics, h^i} of Proposition \ref{prop:basics}, so $m = \chi(E, \mcalG)$.
Thus by \cite[Propri\'et\'e 2.2 (4)]{GauRe},
\[
|f|^2_{(\mcalM, \mcalF)} = |\identity_E|_{(\mcalM, \mcalG)}^2 = \Trace(\phi_{\mcalM}^{-1} \circ \phi_{\mcalG}) = \Trace([m]_E) = 2m = 2 \chi(E, \mcalG).
\]
\end{proof}

The following lemma gives an expression for the degree of an endomorphism of $E^r$, where $r$ is a positive integer and $E$ is an elliptic curve.
We will need this to prove Lemma \ref{lemma:matrix of phi_F}.

\begin{lemma} \label{lemma:deg phi = det M^2}
Let $E$ be an elliptic curve over $K$, $\mcalO = \End_K(E)$, $r$ a positive integer, $M \in \mcalO^{r \times r}$, and $\phi \in \End_K(E^r) \cong \mcalO^{r \times r}$ the endomorphism corresponding to $M$.
Then $\deg \phi = |\det M|^2$.
\end{lemma}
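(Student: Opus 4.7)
The plan is to reduce to the complex analytic setting and read off the degree from the induced $\mbbC$-linear map on universal covers. Since $\deg\phi$ and $\det M$ are invariant under base change and the statement involves only finitely many elements of $K$, I would pass to a finitely generated subfield of $K$ containing the coordinates of $E$ and the entries of $M$, then embed that subfield into $\mbbC$, so that we may assume $K = \mbbC$ (this only enlarges $\mcalO$, which is harmless). Using the uniformization $E(\mbbC) = \mbbC/\Lambda$, we obtain $E^r(\mbbC) = \mbbC^r/\Lambda^r$. Each $c \in \mcalO \subseteq \mbbC$ acts on $\mbbC$ as multiplication by $c$ preserving $\Lambda$, so $\phi$ lifts to the $\mbbC$-linear endomorphism $\tilde\phi$ of $\mbbC^r$ given by the matrix $M \in M_r(\mbbC)$, and $\tilde\phi(\Lambda^r) \subseteq \Lambda^r$.

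From here I would distinguish two cases on $\det M$. If $\det M = 0$, then $\tilde\phi$ has a nontrivial $\mbbC$-kernel, whose image in $E^r(\mbbC)$ is a positive-dimensional closed subgroup, so $\Ker\phi$ is infinite, $\phi$ is not an isogeny, and by convention $\deg\phi = 0 = |\det M|^2$. If $\det M \neq 0$, then $\tilde\phi$ is a $\mbbC$-linear automorphism of $\mbbC^r$ and
\[
\Ker\phi = \tilde\phi^{-1}(\Lambda^r)/\Lambda^r,
\]
a finite group of order $[\Lambda^r : \tilde\phi(\Lambda^r)]$. Since $\Lambda^r$ is a full-rank lattice in $\mbbC^r = \mbbR^{2r}$, this index equals $|\det_\mbbR \tilde\phi|$, where $\tilde\phi$ is viewed as an $\mbbR$-linear endomorphism of $\mbbR^{2r}$. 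The standard identity $\det_\mbbR = |\det_\mbbC|^2$ for a complex-linear endomorphism (write $M$ as a $2r \times 2r$ real block matrix) then gives $\deg\phi = |\det M|^2$.

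There is no real obstacle here beyond bookkeeping; the only step deserving care is the reduction to $K = \mbbC$, since a general characteristic-zero field need not embed into $\mbbC$. The countable-subfield reduction above sidesteps this. A purely algebraic alternative, if one wishes to avoid transcendental arguments, is to use $\ell$-adic Tate modules: $\deg\phi$ equals the $\mbbZ_\ell$-determinant of $\phi$ acting on $T_\ell(E^r) = T_\ell(E)^r$, and Silvester's formula for determinants of block matrices with pairwise commuting entries converts this to $\det_{\mbbZ_\ell}\rho_\ell(\det M) = \deg(\det M) = |\det M|^2$, where the last equality is Proposition \ref{prop:endoms of elliptic curves}.
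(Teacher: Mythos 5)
Your proof is correct, but it proceeds quite differently from the paper's. The paper stays entirely algebraic: it sets $d(M)=\deg\phi$, notes that $d$ is multiplicative, checks $d(M)=|\det M|^2$ directly for diagonal matrices (via $\deg c=|c|^2$ from Proposition \ref{prop:endoms of elliptic curves}) and for permutation and elementary matrices, and then reduces an arbitrary $M$ to diagonal form by row and column operations over $\mcalO$. You instead compute the degree analytically as a lattice index: after a Lefschetz-principle reduction to $K=\mbbC$ (correctly handled --- this is the one point in your approach that genuinely needs care, and your countable-subfield argument deals with it), you get $\deg\phi=[\Lambda^r:\widetilde\phi(\Lambda^r)]=|\det\nolimits_{\mbbR}\widetilde\phi|=|\det M|^2$; your Tate-module variant with the block-determinant formula for commuting blocks is an equally valid, fully algebraic alternative. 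What your route buys is that it sidesteps the diagonalization over $\mcalO$ altogether; that step is in fact the delicate point of the paper's argument, since over an order with non-principal ideals (e.g.\ $\mcalO=\mbbZ[\sqrt{-5}]$) elementary row and column operations cannot diagonalize every matrix --- the column span of $UMV$ is isomorphic to that of $M$ as an $\mcalO$-module, so a matrix whose columns span a non-principal ideal admits no diagonal form $UMV$ --- and the paper's ``one can easily check'' needs repair precisely in the cases your argument handles uniformly. What the paper's approach buys, where it applies, is that it never leaves the ground field or invokes transcendental input.
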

\begin{proof}
Let $I \in \mcalO^{r \times r}$ denote the identity matrix, and $E_{i,j} \in \mcalO^{r \times r}$ the matrix whose $(i,j)$ entry is $1$ and all other entries are zero.
If $M \in \mcalO^{r \times r}$, then let $d(M) = \deg \phi$, where $\phi \in \End_K(E^r) \cong \mcalO^{r \times r}$ is the endomorphism corresponding to $M$.
Note that $d(I) = 1$ and $d(MN) = d(M) d(N)$ for every $M,N \in \mcalO^{r \times r}$.
Let
\[
\mcalT = \{M \in \mcalO^{r \times r}; d(M) = |\det M|^2 \}.
\]
The following statements hold for every $M, N \in \mcalO^{r \times r}$.
\begin{itemize}
\item If $M,N \in \mcalT$, then $MN \in \mcalT$.
\item If $M, MN \in \mcalT$ and $\det M \neq 0$, then $N \in \mcalT$.
\item If $N, MN \in \mcalT$ and $\det N \neq 0$, then $M \in \mcalT$.
\end{itemize}

If $M = \diag(c_1, \dotsc, c_r)$, then $\deg c_i = |c_i|^2$ for every $i \in \{1, \dotsc, r\}$ by Proposition \ref{prop:endoms of elliptic curves}, so $d(M) = \prod_{i=1}^r \deg c_i = \prod_{i=1}^r |c_i|^2 = |\det M|^2$.
Thus every diagonal matrix is in $\mcalT$.
If $M \in \mcalO^{r \times r}$ and $\det M = \pm 1$, then $M^{-1} \in \mcalO^{r \times r}$, so $1 = d(I) = d(M M^{-1}) = d(M) d(M^{-1})$, hence $d(M) = 1 = |\det M|^2$.
So every permutation matrix and every matrix of the form $I + \lambda E_{i,j}$ (where $i,j \in \{1, \dotsc, r\}$ and $\lambda \in \mcalO$) is in $\mcalT$.
Let $\mcalS$ the set of matrices in $\mcalO^{r \times r}$ which can be written as finite products $U_1 \dotsm U_k$, where $k \ge 0$, and each $U_l \in \mcalO^{r \times r}$ is either a permutation matrix or a matrix of the form $I + \lambda E_{i,j}$, and $\det(U_l) \neq 0$ (so if $i=j$, then $\lambda \neq -1$).
Then $\mcalS \subseteq \mcalT$, and if $U,V \in \mcalS$, then $M \in \mcalT$ if and only if $UMV \in \mcalT$.
One can easily check that for every $M \in \mcalO^{r \times r}$ there are $U,V \in \mcalS$ such that $UMV$ is diagonal.
Then $UMV \in \mcalT$, hence $M \in \mcalT$.
\end{proof}

\begin{lemma} \label{lemma:matrix of phi_F}
Let $(E, \mcalM)$ be an elliptic curve over $K$, with its canonical principal polarization, and let $r$ be a positive integer.
Using the isomorphism $\phi_{\mcalM}$ we identify $\widehat{E}$ with $E$, and we also identify $\widehat{E^r}$, $\widehat{E}^r$ and $E^r$.
Let $\mcalO = \End_K(E)$, then we have a canonical ring isomorphism $\End_K(E^r) \cong \mcalO^{r \times r}$.
If $\mcalF$ is a polarization of $E^r$, then $\phi_{\mcalF} \in \Hom_K(E^r, \widehat{E^r}) = \End_K(E^r)$ corresponds to a hermitian positive definite matrix $M \in \mcalO^{r \times r}$, and $\det M = \chi(E^r, \mcalF)$.
\end{lemma}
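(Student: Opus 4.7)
The plan is to establish three properties of $M$ in order: it is Hermitian, it is positive definite, and $\det M = \chi(E^r, \mcalF)$.

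To see $M^* = M$, I would first verify that under the identifications, the Rosati involution on $\End_K(E^r) \cong \mcalO^{r \times r}$ associated to the product polarization $\alpha = \phi_{\mcalM}^r$ acts as the conjugate transpose. Decomposing an endomorphism through the inclusions and projections of $E^r$ turns dualization into transposition of the matrix, and Proposition \ref{prop:endoms of elliptic curves} identifies the entrywise Rosati involution on $\mcalO$ with complex conjugation. On the other hand, $M = \alpha^{-1} \circ \phi_{\mcalF}$ is Rosati-symmetric: applying the general symmetry $\widehat{\phi_{\mcalG}} \circ \can = \phi_{\mcalG}$ (used in the proof of Proposition \ref{prop:basics2}) to both $\mcalG = \mcalF$ and a sheaf inducing $\alpha$, a direct computation gives $\alpha^{-1} \circ \widehat{M} \circ \alpha = M$. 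Combining these two facts yields $M^* = M$.

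For positive definiteness, take a nonzero $x \in \mcalO^r$ and let $f_x \colon E \to E^r$ be the corresponding homomorphism; its kernel is trivial, so $f_x^* \mcalF$ is ample on $E$ by Proposition \ref{prop:basics}\eqref{item:prop:basics, finite ker}. Proposition \ref{prop:basics2} then gives $\phi_{f_x^* \mcalF} = m_x \cdot \phi_{\mcalM}$ for some positive integer $m_x$, while Lemma \ref{lemma:phi_f^*F = f' phi f} rewrites $\phi_{f_x^* \mcalF}$ as $\widehat{f_x} \circ \phi_{\mcalF} \circ f_x$. Translating through the identifications (noting that $\widehat{f_x}$ corresponds to the row vector $x^*$, by the same matrix computation as in the previous step), this composition equals the endomorphism of $E$ associated to $x^* M x \in \mcalO$. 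Hence $x^* M x = m_x \geq 1$ for every nonzero $x \in \mcalO^r$. Since $\mcalO^r$ is a full-rank $\mbbZ$-lattice in $(\mcalO \otimes_{\mbbZ} \mbbR)^r$ and the Hermitian form $q(y) = y^* M y$ takes integer values $\geq 1$ on this lattice minus the origin, a standard geometry-of-numbers argument (if $q$ had a non-positive eigenvalue, the set $\{q < 1\}$ would contain a centrally symmetric convex open subset of infinite volume, and Minkowski's theorem would then furnish a nonzero lattice point there) forces $q$, and hence $M$, to be positive definite.

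Finally, Lemma \ref{lemma:deg phi = det M^2} gives $|\det M|^2 = \deg \phi_{\mcalF} = \chi(E^r, \mcalF)^2$, so $|\det M| = \chi(E^r, \mcalF) > 0$ (positivity by Proposition \ref{prop:basics}\eqref{item:prop:basics, h^i}). Positive definiteness then forces $\det M > 0$, yielding $\det M = \chi(E^r, \mcalF)$. The main obstacle is the positive-definiteness step: it requires both careful bookkeeping of the dualities to obtain the identity $x^* M x = m_x$, and the geometry-of-numbers argument promoting \emph{positive on the lattice} to \emph{positive definite Hermitian on $(\mcalO \otimes_{\mbbZ} \mbbR)^r$}.
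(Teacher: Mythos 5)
Your proposal is correct and follows the same overall strategy as the paper: Hermitian symmetry from the self-duality $\widehat{\phi_{\mcalF}} = \phi_{\mcalF}$ read through the identifications, positivity of $x^* M x$ from $\widehat{f_x}\circ\phi_{\mcalF}\circ f_x = \phi_{f_x^*\mcalF} = [m_x]_E$ with $f_x^*\mcalF$ ample, and the determinant identity from Lemma \ref{lemma:deg phi = det M^2} combined with $\chi(E^r,\mcalF)>0$. The one place you genuinely diverge is the promotion from ``$x^*Mx \ge 1$ for all nonzero lattice points'' to positive definiteness of the Hermitian form on $(\mcalO\otimes_{\mbbZ}\mbbR)^r$. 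The paper clears denominators to get $x^*Mx\ge 0$ on $F^r$ ($F$ the fraction field of $\mcalO$), uses density of $F$ in $\mbbR$ or $\mbbC$ and continuity to conclude $M$ is positive \emph{semi}definite, and only then upgrades to definiteness via $\det M = \chi(E^r,\mcalF) > 0$; you instead invoke a Minkowski-type argument (a thin slab around a null or negative direction is a symmetric convex open set of infinite volume lying in $\{q<1\}$, so it would contain a nonzero lattice point). Both are sound; the paper's route is softer and avoids geometry of numbers, while yours gets strict definiteness in one step without needing the determinant computation first --- though you must handle the zero-eigenvalue case by using $Mv=0$ (hence $v^*Mw=0$) to keep the slab inside $\{q<1\}$, which your sketch leaves implicit. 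One small slip: the kernel of $f_x$ for nonzero $x\in\mcalO^r$ is finite but not generally trivial (e.g.\ $x=(2,0,\dotsc,0)$); finiteness is all that Proposition \ref{prop:basics}\eqref{item:prop:basics, finite ker} requires, so nothing breaks.
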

\begin{proof}
The identity $\widehat{\phi_{\mcalF}} = \phi_{\mcalF}$ (see \cite[(7.8)]{EMvdG} or \cite[Corollary 2.4.6 (c)]{BirLa}) implies that $M$ is hermitian.
First let $x \in \mcalO^r \setminus \{0\}$, and let $\alpha \colon E \to E^r$ be the corresponding nonzero homomorphism.
Then $x^* M x = \widehat{\alpha} \phi_{\mcalF} \alpha = \phi_{\alpha^* \mcalF}$ by Lemma \ref{lemma:phi_f^*F = f' phi f}.
Here $\alpha^* \mcalF$ is an ample invertible sheaf on $E$ by Lemma \ref{lemma:|gf| and |f|} (since $\Ker \alpha$ is finite), so $\phi_{\alpha^* \mcalF} = [n]_E$ for some positive integer $n$, hence $x^* M x = n > 0$.
Let $F$ denote the quotient field of $\mcalO$.
Then $F = \mbbQ$ if $\mcalO = \mbbZ$, while otherwise $F$ is an imaginary quadratic number field.
If $x \in F^r$, then there is a positive integer $k$ such that $k x \in \mcalO^r$, so $k^2 (x^* M x) = (kx)^* M (kx) \ge 0$, hence $x^* M x \ge 0$.
If $\mcalO=\mbbZ$, then $F = \mbbQ$ is dense in $\mbbR$, while otherwise $F$ is dense in $\mbbC$, so using the continuity of the map $x \mapsto x^* M x$, we obtain that $M$ is positive semidefinite.
Then $\det M \ge 0$, so it is now enough to prove that $\det M = \chi(E^r, \mcalF) > 0$.

According to part \eqref{item:prop:basics, h^i} of Proposition \ref{prop:basics}, $\chi(E^r, \mcalF) > 0$, because $\mcalF$ is ample.
Using Lemma \ref{lemma:deg phi = det M^2}, we obtain
\[
\chi(E^r, \mcalF)^2 = \deg(\phi_{\mcalF}) = (\det M)^2.
\]
Here $\det M \ge 0$, so $\det M = \chi(E^r, \mcalF) > 0$.
\end{proof}

We introduce a few notions and notations for lattices.
Let $\Omega \neq \{0\}$ be a lattice in a real euclidean space $(E, |\cdot|)$ of dimension $n$.
Let
\[
\lambda(\Omega) = \min \{|x| ; \, x \in \Omega \setminus \{0\}\}.
\]
Let $\Lambda(\Omega)$ be the smallest $\lambda \ge 0$ such that there are linearly independent $\omega_1, \dotsc, \omega_n \in \Omega$ with $|\omega_1|, \dotsc, |\omega_n| \le \lambda$.
Equivalently, $\Lambda(\Omega)$ is the smallest $\lambda \ge 0$ such that $\{\omega \in \Omega; \, |\omega| \le \lambda\}$ generates a finite index subgroup of $\Omega$.
The covering radius $R(\Omega)$ of $\Omega$ is the smallest $\rho \ge 0$ such that for every $x \in E$, there is an $\omega \in \Omega$ with $|x-\omega| \le \rho$.
Let $\vol(\Omega)$ denote the covolume of $\Omega$.
One can easily check that $R(\Omega) \le \frac{1}{2} \sqrt{n} \Lambda(\Omega)$ and $\lambda(\Omega) \le \Lambda(\Omega) \le 2R(\Omega)$.

In the following elementary lemma we construct a matrix $S \in \mbbR^{2r \times 2r}$ from a matrix $M \in \mbbC^{r \times r}$ and a number $\tau \in \mbbC$, and get an expression for $\det S$ using $\det M$ and $\Impart \tau$.
We will use this lemma for covolume calculations of lattices in section \ref{section:covolume-calculation}.

\begin{lemma} \label{lemma:matrix lemma}
Let $r$ be a positive integer, $A, B \in \mbbC^{r \times r}$, $s,t \in \mbbC$, and $N = \left(\begin{smallmatrix} A+B & sA + tB \\ tA+sB & st(A+B) \end{smallmatrix}\right) \in \mbbC^{2r \times 2r}$.
Then
\[
\det N = (-1)^r (t-s)^{2r} (\det A) (\det B).
\]
If $M \in \mbbC^{r \times r}$, $\tau \in \mbbC$ and $S = \left(\begin{smallmatrix} \Repart(M) & \Repart(\tau M) \\ \Repart(\overline{\tau} M) & |\tau|^2 \Repart(M) \end{smallmatrix}\right) \in \mbbR^{2r \times 2r}$, then
\[
\det S = (\Impart \tau)^{2r} |\det M|^2.
\]
\end{lemma}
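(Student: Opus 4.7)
For the first identity, the plan is to exhibit the explicit block factorization
\[
N = \begin{pmatrix} I & I \\ tI & sI \end{pmatrix} \begin{pmatrix} A & 0 \\ 0 & B \end{pmatrix} \begin{pmatrix} I & sI \\ I & tI \end{pmatrix},
\]
which one verifies by direct multiplication: the first two factors give $\left(\begin{smallmatrix} A & B \\ tA & sB \end{smallmatrix}\right)$, and multiplying on the right by the third factor produces exactly $N$. The outer factors are block matrices whose blocks are all scalar multiples of $I_r$, so they pairwise commute and the standard commuting-block determinant formula applies, giving $\det\left(\begin{smallmatrix} I & I \\ tI & sI \end{smallmatrix}\right) = \det((s-t)I) = (s-t)^r$ and similarly $(t-s)^r$ for the third factor. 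The middle factor is block-diagonal with determinant $(\det A)(\det B)$. Multiplying out yields $\det N = (s-t)^r (t-s)^r (\det A)(\det B) = (-1)^r (t-s)^{2r} (\det A)(\det B)$.

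For the second identity, the plan is to reduce it to the first by the substitution $A = \tfrac12 M$, $B = \tfrac12 \overline{M}$, $s = \tau$, $t = \overline{\tau}$. Writing $\Repart(z) = (z+\overline z)/2$, a direct check shows that each of the four blocks of $S$ coincides with the corresponding block of $N$ under this substitution; for instance $sA + tB = \tfrac12(\tau M + \overline\tau\,\overline M) = \Repart(\tau M)$ and $st(A+B) = |\tau|^2 \Repart(M)$. Applying Part~1, we obtain
\[
\det S = (-1)^r (\overline\tau - \tau)^{2r} \det\bigl(\tfrac12 M\bigr) \det\bigl(\tfrac12 \overline M\bigr).
\]

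The proof is then finished by the elementary simplifications $\overline\tau - \tau = -2i\,\Impart\tau$, so $(\overline\tau-\tau)^{2r} = (-4)^r (\Impart\tau)^{2r} = (-1)^r 4^r (\Impart\tau)^{2r}$, together with $\det(\tfrac12 M)\det(\tfrac12 \overline M) = 4^{-r}\,(\det M)(\overline{\det M}) = 4^{-r} |\det M|^2$, and the two factors of $(-1)^r$ cancel.

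There is no real obstacle here; the only nonroutine ingredient is guessing the factorization in Part~1. I found it by noting that the block structure of $N$ is bilinear in $A,B$ with coefficients symmetric in $s,t$, which suggests sandwiching $\diag(A,B)$ between two van der Monde-type blocks. Once the factorization is in hand, Part~2 is purely mechanical verification.
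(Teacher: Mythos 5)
Your proof is correct and follows essentially the same route as the paper: the paper reduces $N$ to a block form with a zero block via elementary row and column operations, which is exactly what your three-factor decomposition $N = \smat{I}{I}{tI}{sI}\diag(A,B)\smat{I}{sI}{I}{tI}$ encodes (your outer factors are the corresponding elementary block operations, and the commuting-block determinant evaluation matches the paper's $(t-s)^r$ extractions and sign). The second half, via the substitution $A=\tfrac12 M$, $B=\tfrac12\overline{M}$, $s=\tau$, $t=\overline{\tau}$, is identical to the paper's argument.
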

\begin{proof}
Using row and column operations, we see that
\begin{align*}
\det N &= \det \left(\begin{smallmatrix} A+B & sA + tB \\ (t-s)A & s(t-s)A \end{smallmatrix}\right) = (t-s)^r \det \left(\begin{smallmatrix} A+B & sA + tB \\ A & sA \end{smallmatrix}\right) \\
&= (t-s)^r \det \left(\begin{smallmatrix} A+B & (t-s)B \\ A & 0 \end{smallmatrix}\right) = (-1)^r (t-s)^{2r} (\det A) (\det B).
\end{align*}
Substituting $A = \frac{1}{2} M$, $B = \frac{1}{2} \overline{M}$, $s = \tau$ and $t = \overline{\tau}$, we obtain $N = S$ and
\[
\det S = (-1)^r (\overline{\tau} - \tau)^{2r} \det\left(\frac{1}{2} M\right) \det\left(\frac{1}{2} \overline{M}\right) = (\Impart \tau)^{2r} |\det M|^2.
\]
\end{proof}

\section{Morphisms and Jacobians}

We fix the following notation for this section.
Let $K$ denote a field of characteristic zero, and let $\overline{K}$ be an algebraic closure of $K$.
Let $X$ and $Y$ be geometrically integral smooth projective curves defined over $K$, of genus $g(X)$ and $g(Y)$.
Let $\Omega = \Hom_K(J(X), J(Y))$, then $\Omega$ is a free abelian group of finite rank.
The vector space $\mbbR \otimes_{\mbbZ} \Omega$ together with the Rosati metric is a euclidean space, and $\Omega$ is a lattice in it.

The Jacobian of the curve $X$ is $J(X) = \Pic^0(X)$.
This is an abelian variety over $K$ of dimension $g(X)$, with a canonical principal polarization.
We have a canonical isomorphism between $J(X)(\overline{K})$ and the group of isomorphism classes of invertible sheaves on $X_{\overline{K}}$, therefore by abuse of notation we simply identify these two sets.
The canonical principal polarization $\phi \colon J(X) \to \widehat{J(X)}$ can be described as follows.
Choose any $P_0 \in X(K_0)$, then we get a $\overline{K}$-morphism $\iota_{X, P_0} \colon X \to J(X)$, where $\iota_{X, P_0}(P) = \mcalL([P]-[P_0])$ for every $P \in X(\overline{K})$.
Then $\iota_{X, P_0}^* \colon \widehat{J(X)} \to J(X)$ is a $\overline{K}$-morphism.
This does not depend on the choice of $P_0$, so it is in fact a $K$-morphism, and moreover it is an isomorphism, with inverse $-\phi$.
(See Lemma 11.3.1 and Proposition 11.3.5 in \cite{BirLa}.)

Using the principal polarizations of $J(X)$ and $J(Y)$, we can define the homomorphisms
\[
\cdot^{\dag} \colon \Hom_K(J(X), J(Y)) \to \Hom_K(J(Y), J(X))
\]
and
\[
\cdot^{\dag} \colon \Hom_K(J(Y), J(X)) \to \Hom_K(J(X), J(Y))
\]
the same way as in the Preliminaries.
Here $(f^{\dag})^{\dag} = f$ if $f \in \Hom_K(J(X), J(Y))$ or $f \in \Hom_K(J(Y), J(X))$.

If $f \colon X \to Y$ is a $K$-morphism, then there is a unique homomorphism $J(f) \in \Hom_K(J(X), J(Y))$ such that $J(f)(\mcalL) = f_* \mcalL$ for every $\mcalL \in J(X)(\overline{K})$.
Similarly, there is a unique homomorphism $J'(f) \in \Hom_K(J(Y), J(X))$ such that $J'(f)(\mcalL) = f^* \mcalL$ for every $\mcalL \in J(Y)(\overline{K})$.

\begin{lemma} \label{lemma:J'(f) = J(f)^dag}
$J'(f) = J(f)^{\dag}$.
\end{lemma}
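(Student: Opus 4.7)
The plan is to work directly from the definitions, using the Abel–Jacobi description of the canonical principal polarization given above, together with functoriality of Picard varieties.

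First I would fix a base point $P_0 \in X(\overline{K})$ and set $Q_0 = f(P_0) \in Y(\overline{K})$, so that both Abel–Jacobi maps $\iota_{X,P_0} \colon X \to J(X)$ and $\iota_{Y,Q_0} \colon Y \to J(Y)$ are defined. The first key step is to check the compatibility square
\[
\iota_{Y,Q_0} \circ f = J(f) \circ \iota_{X,P_0},
\]
which reduces to an identity of $\overline{K}$-points: for $P \in X(\overline{K})$,
\[
J(f)\bigl(\mcalL([P]-[P_0])\bigr) = f_*\mcalL([P]-[P_0]) = \mcalL([f(P)]-[Q_0]) = \iota_{Y,Q_0}(f(P)).
\]

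Next I would apply the contravariant functor $\Pic^0$ (equivalently, pass to the dual morphisms) to this square. Functoriality of pullback yields
\[
\iota_{X,P_0}^* \circ \widehat{J(f)} = f^* \circ \iota_{Y,Q_0}^*
\]
as homomorphisms $\widehat{J(Y)} \to J(X)$, where on $\Pic^0(Y)$ the map $f^*$ is precisely $J'(f)$ by its defining property. According to the description of the canonical polarizations recalled just before the lemma, one has $\iota_{X,P_0}^* = -\phi_{J(X)}^{-1}$ and $\iota_{Y,Q_0}^* = -\phi_{J(Y)}^{-1}$. Substituting these identifications, the signs cancel and I obtain
\[
J'(f) = \phi_{J(X)}^{-1} \circ \widehat{J(f)} \circ \phi_{J(Y)} = J(f)^{\dag},
\]
which is the claim.

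The argument is essentially bookkeeping, so there is no genuine obstacle; the only subtlety is ensuring that the identifications $\widehat{J(X)} \cong J(X)$ and $\widehat{J(Y)} \cong J(Y)$ used in the definition of the Rosati involution are the same as the ones coming from the Abel–Jacobi maps (up to the sign noted in the setup), and that choosing $P_0 \in X(\overline{K})$ rather than $X(K)$ is harmless since both $J(f)$ and $J'(f)$ are already $K$-morphisms and the equality can be checked after base change to $\overline{K}$.
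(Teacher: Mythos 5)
Your proposal is correct and follows essentially the same route as the paper: verify the compatibility square $\iota_{Y,Q_0}\circ f = J(f)\circ\iota_{X,P_0}$ pointwise, dualize it, and substitute the identification $\iota^* = -\phi^{-1}$ of the Abel--Jacobi pullback with the (negative inverse of the) canonical polarization, with the signs cancelling. The only difference is cosmetic --- you dualize the square as an identity of homomorphisms, whereas the paper evaluates on a point $\mcalM \in J(Y)(\overline{K})$ --- and your closing remarks on the choice of $P_0\in X(\overline{K})$ are exactly the right sanity checks.
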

\begin{proof}
Choose a point $P_0 \in X(\overline{K})$, and let $Q_0 = f(P_0) \in Y(\overline{K})$.
As we have seen above, the principal polarizations are $\phi_1 = -(\iota_{X,P_0}^*)^{-1} \colon J(X) \to \widehat{J(X)}$ and $\phi_2 = -(\iota_{Y,Q_0}^*)^{-1} \colon J(Y) \to \widehat{J(Y)}$.
Then $J(f)^{\dag} = \phi_1^{-1} \circ \widehat{J(f)} \circ \phi_2$.
Let $\mcalM \in J(Y)(\overline{K})$, then
\[
J(f)^{\dag}(\mcalM) = \iota_{X,P_0}^*(J(f)^*((\iota_{Y,Q_0}^*)^{-1}(\mcalM))) = (J(f) \circ \iota_{X,P_0})^*((\iota_{Y,Q_0}^*)^{-1}(\mcalM)).
\]
If $P \in X(\overline{K})$, then
\[
(J(f) \circ \iota_{X,P_0})(P) = f_*(\mcalL([P]-[P_0])) = \mcalL([f(P)] - [Q_0]) = (\iota_{Y,Q_0} \circ f)(P)),
\]
so $J(f) \circ \iota_{X,P_0} = \iota_{Y,Q_0} \circ f$.
Hence
\[
J(f)^{\dag}(\mcalM) = f^*(\iota_{Y,Q_0}^*((\iota_{Y,Q_0}^*)^{-1}(\mcalM))) = f^*(\mcalM) = J'(f)(\mcalM).
\]
\end{proof}

Let $\Mor_K(X,Y)$ denote the set of $K$-morphisms from $X$ to $Y$.
If $\varphi \colon X \to Y$ is a constant morphism, then we define its degree to be zero.

\begin{proposition} \label{prop:deg phi = 1/2g q(J(phi))}
If $\varphi \colon X \to Y$ is a $K$-morphism, and $g(Y) > 0$, then
\[
\deg \varphi = \frac{1}{2 g(Y)} |J(\varphi)|^2 = \frac{1}{2 g(Y)} |J(\varphi)^{\dag}|^2,
\]
where the Rosati norms are defined with respect to the canonical principal polarizations of $J(X)$ and $J(Y)$.
\end{proposition}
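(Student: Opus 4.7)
The plan is to reduce the proposition to the projection-formula identity $\varphi_* \circ \varphi^* = [\deg \varphi]$ on $\Pic(Y)$, and then compute the Rosati trace of a multiplication-by-$n$ endomorphism of $J(Y)$. If $\varphi$ is constant then $\deg \varphi = 0$ by the convention above, and $J(\varphi) = 0$, so both sides vanish. Otherwise $\varphi$ is finite and flat of some degree $n := \deg \varphi \ge 1$, and I would work in this case from here on.

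For the first equality, I would invoke the standard projection formula for a finite flat morphism of smooth projective curves: $\varphi_*(\varphi^* \mcalM) \cong \mcalM^{\otimes n}$ for every invertible sheaf $\mcalM$ on $Y_{\overline{K}}$. Restricting to degree-zero sheaves and translating via the definitions of $J(\varphi)$ and $J'(\varphi)$, this says $J(\varphi) \circ J'(\varphi) = [n]_{J(Y)}$. By Lemma \ref{lemma:J'(f) = J(f)^dag} we have $J'(\varphi) = J(\varphi)^{\dag}$, hence $J(\varphi) \circ J(\varphi)^{\dag} = [n]_{J(Y)}$. Taking the Rosati trace of both sides then yields
\[
|J(\varphi)|^2 = \Trace\bigl(J(\varphi) \circ J(\varphi)^{\dag}\bigr) = \Trace\bigl([n]_{J(Y)}\bigr) = 2 g(Y) \cdot n,
\]
where the last step uses the standard identity $\Trace([n]_A) = 2 (\dim A) n$ for an abelian variety $A$; the elliptic-curve case $\Trace([m]_E) = 2m$ was already exploited in the proof of Lemma \ref{lemma:|f|^2=2h^0(f^*F)}, and the general case is the corresponding instance of \cite[Propri\'et\'e 2.2]{GauRe}. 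Dividing by $2g(Y)$ (which is nonzero by hypothesis) yields the first equality.

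The second equality is immediate from the fact, noted in the Preliminaries, that the Rosati involution is an isometry: $|J(\varphi)^{\dag}|^2 = \Trace\bigl(J(\varphi)^{\dag} \circ J(\varphi)\bigr) = |J(\varphi)|^2$. The only part of the argument that requires any care is the bookkeeping of push-forward versus pull-back conventions when applying the projection formula, to confirm that the composition $J(\varphi) \circ J'(\varphi)$ equals $[n]_{J(Y)}$ (and not $[n]_{J(X)}$) — this is also why the hypothesis $g(Y) > 0$ enters, since one divides by $2 g(Y)$ at the end. Once this is in place the proof is essentially a one-line trace computation.
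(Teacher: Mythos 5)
Your proposal is correct and is essentially the paper's own argument: the paper likewise handles the constant case separately, uses the projection formula $\varphi_*\varphi^*\mcalL = \mcalL^{\otimes \deg\varphi}$ (citing Liu, Ch.~7, Thm.~2.18) together with $J'(\varphi) = J(\varphi)^{\dag}$ to get $J(\varphi)\circ J(\varphi)^{\dag} = [\deg\varphi]_{J(Y)}$, and then applies $\Trace([m]_{J(Y)}) = 2g(Y)m$ from \cite[Propri\'et\'e 2.2 (4)]{GauRe}, with the second equality following from the Rosati involution being an isometry. No gaps.
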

\begin{proof}
If $f \in \Omega$, then $|f|^2 = \Trace(f \circ f^{\dag}) = \Trace(f^{\dag} \circ f) = |f^{\dag}|^2$, so $|f| = |f|^{\dag}$.
As a special case we obtain $|J(\varphi)| = |J(\varphi)^{\dag}|$.
If $\varphi$ is constant, then $\deg \varphi = 0$ and $J(\varphi) = 0$, so we may assume that $\varphi$ is not constant.
Then
\[
(J(\varphi) \circ J(\varphi)^{\dag}) (\mcalL) = \varphi_*(\varphi^* \mcalL) = \mcalL^{\otimes \deg \varphi} = [\deg \varphi]_{J(Y)} (\mcalL)
\]
for every $\mcalL \in J(Y)(\overline{K})$ by Theorem 2.18 of \cite[Chapter 7]{Liu}, so
$J(\varphi) \circ J(\varphi)^{\dag} = [\deg \varphi]_{J(Y)}$.
Thus
\[
|J(\varphi)|^2 = q_{J(X),J(Y)}(J(\varphi)) = \Trace(J(\varphi) \circ J(\varphi)^{\dag}) = \Trace([\deg \varphi]_{J(Y)}) = 2g(Y) \deg \varphi
\]
by \cite[Propri\'et\'e 2.2 (4)]{GauRe}.
\end{proof}

\begin{remark}
Here is another proof for Proposition \ref{prop:deg phi = 1/2g q(J(phi))}.
Let $\mcalL$ and $\mcalM$ denote the principal polarizations of $J(X)$ and $J(Y)$.
Let us fix a point $x_0 \in J(X)(\overline{K})$, and define $y_0 = \varphi(x_0) \in J(Y)(\overline{K})$.
Using $x_0$ and $y_0$ we can construct canonical $\overline{K}$-morphisms $\iota_{X,x_0} \colon X \to J(X)$ and $\iota_{Y,y_0} \colon Y \to J(Y)$, where $\iota_X(x) = \mcalL([x]-[x_0])$ and $\iota_Y = \mcalL([y]-[y_0])$ for every $x \in X(\overline{K})$ and $y \in Y(\overline{K})$.
Then $\iota_X$ and $\iota_Y$ are closed immersions, and $\iota_Y \circ \varphi = J(\varphi) \circ \iota_X$.
According to \cite[Proposition 2.8]{GauRe}, we have
\[
|J(\varphi)|^2 = q_{(\mcalL, \mcalM)}(J(\varphi)) = \frac{2 g(X)}{\mcalL^{\cdot g(X)}} (\mcalL^{\cdot g(X)-1} \cdot J(\varphi)^* \mcalM),
\]
where the right hand side contains intersection products.
In the Chow ring $\mcalL^{\cdot g(X)} = g(X)!$ and $\mcalL^{\cdot g(X)-1} = (g(X)-1)! X$ (see \cite[Theorem 3.6.3 and Formula 11.2.1]{BirLa}), so
\[
\frac{1}{2} q_{(\mcalL, \mcalM)}(J(\varphi)) = X \cdot J(\varphi)^* \mcalM.
\]
Using the definition of the intersection product (see \cite[section 2.3]{Ful}), part \eqref{item:prop:basics, deg(f^*L)} of Proposition \ref{prop:basics}, and $Y \cdot \mcalM = g(Y)$ (see \cite[Corollary 11.2.2]{BirLa}), we obtain
\begin{align*}
\frac{1}{2} q_{(\mcalL, \mcalM)}(J(\varphi)) &= \deg(\iota_X^* J(\varphi)^* \mcalM) = \deg(\varphi^* \iota_Y^* \mcalM) = (\deg \varphi) \cdot (Y \cdot \mcalM) \\
&= g(Y) \deg \varphi.
\end{align*}
\end{remark}

From now on let $g(Y) = 1$, then $E = J(Y)$ is an elliptic curve over $K$, and $Y$ is a torsor over $E$.
According to Proposition \ref{prop:deg phi = 1/2g q(J(phi))}, $\deg \varphi = \frac{1}{2} |J(\varphi)|^2$ for every morphism $\varphi \in \Mor_K(X,Y)$.
In particular $\varphi$ is non-constant if and only if $J(\varphi) \neq 0$.
So determining the minimal degree of non-constant morphisms in $\Mor_K(X,Y)$ is equivalent to determining the minimal norm of nonzero elements in the set
\[
U := \{J(\varphi) ; \, \varphi \in \Mor_K(X,Y) \} \subseteq \Omega.
\]
Clearly $0 \in U$ if and only if $Y(K) \neq \varnothing$, and $U \neq \varnothing$ if and only if $\Mor_K(X,Y) \neq \varnothing$.
There is a non-constant $K$-morphism from $X$ to $Y$ if and only if $U$ contains a nonzero element.
The following proposition describes the structure of $U$ if $\Mor_K(X,Y) \neq \varnothing$.

\begin{proposition} \label{prop:structure of J(phi); phi:X->Y}
Suppose that $\Mor_K(X,Y) \neq \varnothing$.
Then $U$ is a coset of a finite index subgroup $H$ of $\Omega$, and $p(X) \Omega \subseteq H$.
In particular, if $X(K) \neq \varnothing$, then $U = H = \Omega$.
\end{proposition}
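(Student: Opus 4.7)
The plan is to exploit the fact that $Y$ is a torsor under $E = J(Y)$, so $\Mor_K(X, Y)$ is itself a torsor over the group $\Mor_K(X, E)$, and the map $\varphi \mapsto J(\varphi)$ respects this torsor structure. Fix any $\varphi_0 \in \Mor_K(X, Y)$. Since the torsor subtraction $Y \times Y \to E$ is a $K$-morphism, for every $\varphi \in \Mor_K(X, Y)$ the pointwise difference $\psi := \varphi - \varphi_0 \colon X \to E$ lies in $\Mor_K(X, E)$, and conversely every such $\psi$ produces $\varphi_0 + \psi \in \Mor_K(X, Y)$. Via the identification $J(E) = E$ we view $J(\psi) \in \Hom_K(J(X), E) = \Omega$. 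Using the relation $J(\varphi) \circ \iota_{X, P_0} = \iota_{Y, \varphi(P_0)} \circ \varphi$ on $\overline{K}$-points (the same computation that appears in the proof of Lemma \ref{lemma:J'(f) = J(f)^dag}), the two sides $J(\varphi) - J(\varphi_0)$ and $J(\psi)$ agree on the image of $\iota_{X, P_0}$, which generates $J(X)$. Thus $J(\varphi) = J(\varphi_0) + J(\psi)$, and letting $H := \{J(\psi) : \psi \in \Mor_K(X, E)\}$, which is visibly a subgroup of $\Omega$, one gets $U = J(\varphi_0) + H$.

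The heart of the argument is to show $p(X)\,\Omega \subseteq H$. Given $\alpha \in \Omega$, choose a divisor $D$ on $X_{\overline{K}}$ of degree $p(X)$ whose class is $\Gal(\overline{K}/K)$-invariant; such a $D$ exists by the very definition of $p(X)$. Define $\iota_D \colon X_{\overline{K}} \to J(X)_{\overline{K}}$ on $\overline{K}$-points by $P \mapsto \mcalL(p(X)[P] - D) \in J(X)(\overline{K})$. The invariance of the class $[D]$ (not of $D$ itself) makes $\iota_D$ Galois-equivariant, so by Galois descent $\iota_D$ is defined over $K$. Then $\psi := \alpha \circ \iota_D \in \Mor_K(X, E)$, and the telescoping identity
\[
\iota_D(P) - \iota_D(P_0) = \mcalL(p(X)[P] - p(X)[P_0]) = p(X)\,\iota_{X, P_0}(P)
\]
yields $J(\psi) = p(X)\alpha \in H$. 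Since $\Omega$ is free of finite rank, $H \supseteq p(X)\,\Omega$ forces $[\Omega : H] < \infty$.

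Finally, if $X(K) \neq \varnothing$, pick $P_0 \in X(K)$: the divisor $[P_0]$ witnesses $p(X) = 1$, so $H = \Omega$; moreover $\varphi_0(P_0) \in Y(K)$, and the constant $K$-morphism $X \to Y$ with value $\varphi_0(P_0)$ lies in $\Mor_K(X, Y)$ and contributes $0 \in U$, forcing $U = H = \Omega$. The most delicate step is the descent of $\iota_D$: because $D$ itself is generally not $K$-rational, the $K$-rationality of $\iota_D$ must be read off from the equality $\mcalL(p(X)[\sigma P] - \sigma D) = \mcalL(p(X)[\sigma P] - D)$ in $\Pic^0(X_{\overline{K}})$ together with Galois descent for morphisms of $K$-varieties, rather than from any $K$-rationality of $D$.
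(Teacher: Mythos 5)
Your proof is correct and follows essentially the same route as the paper: exploit the torsor structure of $Y$ under $E=J(Y)$ to exhibit $U$ as a coset of $H$, and use a degree-$p(X)$ divisor with Galois-invariant class to build the $K$-morphism $\iota_D$ and deduce $p(X)\,\Omega\subseteq H$ via the same telescoping computation on degree-zero divisors. The only cosmetic difference is that you get the subgroup property of $H$ by presenting it as the image of the group $\Mor_K(X,E)$ under the additive map $\psi\mapsto J(\psi)$, whereas the paper checks closure under subtraction directly using the map $\rho(y_1,y_2,y_3)=y_1+y_2-y_3$; both rest on the same underlying identity.
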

\begin{proof}
Since $Y$ is a genus one curve and $E$ is its Jacobian, $Y$ is a torsor over $E$.
Hence there is a canonical $K$-morphism $\kappa \colon E \times Y \to Y$, such that $\kappa(0,y) = y$, $\kappa(a,\kappa(b,y)) = \kappa(a+b, y)$ for every $a,b \in E$ and $y \in Y$, and $\eta \colon E \times Y \to Y \times Y$, $(a,y) \mapsto (\kappa(a,y), y)$ is an isomorphism.
Let $\nu = \pi_1 \circ \eta^{-1} \colon Y \times Y \to E$, where $\pi_1 \colon E \times Y \to E$ is the canonical projection.
So basically ``$\kappa(a,y)=a+y$'' and ``$\nu(y_1, y_2) = y_1-y_2$''.
Then $\kappa(\nu(y_1, y_2),y_2) = y_1$ for every $y_1, y_2 \in Y$.
Let
\[
\rho \colon Y \times Y \times Y \to Y, \quad (y_1, y_2, y_3) \mapsto \kappa(\nu(y_2,y_3), y_1)
\]
(so basically ``$\rho(y_1, y_2, y_3) = y_1+y_2-y_3$'').
We fix a morphism $f_0 \in \Mor_K(X,Y)$, and define
\[
H = \{J(f)-J(f_0) ; \, f \in \Mor_K(X,Y)\}.
\]
We need to prove that $H$ is a subgroup of $\Omega$ such that $p(Y) \Omega \subseteq H$.
Since $\Omega \cong \mbbZ^s$ for some $s \in \mbbZ_{\ge 0}$, this will imply that $|\Omega / H| \le p(Y)^s < \infty$.

Let $f,g \in \Mor_K(X,Y)$, then we can define a $K$-morphism $h \colon X \to Y$, $x \mapsto \rho(f_0(x), f(x), g(x))$.
We claim that $J(h)= J(f_0) + J(f) - J(g)$.
To see this, define the $K$-morphism
\[
d_{f,g} \colon X \to J(Y), \quad x \mapsto \nu(f(x),g(x)) = \mcalL([f(x)] - [g(x)]).
\]
Clearly $d_{f,g} = d_{h,f_0}$.
If $\mcalM \in J(X)(\overline{K})$, then we can write $\mcalM = \mcalL(\sum_{i=1}^r n_i [P_i])$ for some $P_i \in X(\overline{K})$ and $n_i \in \mbbZ$ with $\sum_{i=1}^r n_i = 0$.
Then
\begin{align*}
(J(f)-J(g))(\mcalM) &= f_* \mcalM - g_* \mcalM = \mcalL(\sum_{i=1}^r n_i ([f(P_i)]-[g(P_i)])) = \sum_{i=1}^r n_i d_{f,g}(P_i) \\
&= \sum_{i=1}^r n_i d_{h,f_0}(P_i) = \mcalL(\sum_{i=1}^r n_i ([h(P_i)]-[f_0(P_i)])) \\
&= h_* \mcalM - (f_0)_* \mcalM = (J(h)-J(f_0))(\mcalM),
\end{align*}
so indeed $J(f)-J(g) = J(h)-J(f_0)$.
Thus $J(h) - J(f_0) = (J(f) - J(f_0)) - (J(g) - J(g_0))$, which proves that $H$ is closed under subtraction.
Since $H \neq \varnothing$, this implies that $H$ is a subgroup of $\Omega$, and that $U$ is a coset of $H$.

Let $D$ be a divisor of degree $p(X)$ on $X_{\overline{K}}$, such that $\mcalL(D) \cong \mcalL(\sigma D)$ for every $\sigma \in \Gal(\overline{K}/K)$.
Then there is a unique $K$-morphism $\iota_D \colon X \to J(X)$ such that $\iota_D(P) = \mcalL(p(X) [P] - D)$ for every $P \in X(\overline{K})$.
Let $t \in \Omega$ be arbitrary.
We need to show that there is an $f \in \Mor_K(X,Y)$ such that $J(f)-J(f_0) = p(X) t$.
We can define a $K$-morphism
\[
f \colon X \to Y, \quad x \mapsto \kappa(t(\iota_D(x)), f_0(x)).
\]
Take any $\mcalM = \mcalL(\sum_{i=1}^r n_i [P_i]) \in J(X)(\overline{K})$.
Note that $d_{f,f_0} = t \circ \iota_D$, so
\begin{align*}
(J(f)-J(f_0))(\mcalM) &= \sum_{i=1}^r n_i d_{f,f_0}(P_i) = \sum_{i=1}^r n_i t(\iota_D(P_i)) \\
&= \sum_{i=1}^r n_i t(\mcalL(p(X)[P_i]-D)) = t(\mcalL(\sum_{i=1}^r n_i (p(X)[P_i]-D))) \\
&= p(X) t(\mcalM),
\end{align*}
because $\sum_{i=1}^r n_i = 0$.
So indeed $J(f)-J(f_0) = p(X) t$, therefore $p(X) \Omega \subseteq H$.

Finally, if $X(K) \neq \varnothing$, then $p(X) = 1$, so $U = H = \Omega$.
\end{proof}

\begin{corollary} \label{cor:mu(X,Y)=1/2 min(|f|)^2}
Suppose that there is a non-constant $K$-morphism from $X$ to $Y$.
Then
\[
\frac{1}{2} \lambda(\Omega)^2 \le \mu_K(X,Y) = \frac{1}{2} (\min \{|u| ; \, u \in U \setminus \{0\}\})^2.
\]
If $Y(K) = \varnothing$, then
\[
\mu_K(X,Y) \le \frac{1}{2} p(X)^2 R(\Omega)^2.
\]
If $Y(K) \neq \varnothing$, then
\[
\mu_K(X,Y) \le \frac{1}{2} p(X)^2 \lambda(\Omega)^2.
\]
If $X(K) \neq \varnothing$, then
\[
\mu_K(X,Y) = \frac{1}{2} \lambda(\Omega)^2.
\]
\end{corollary}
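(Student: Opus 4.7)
The plan is to combine Proposition \ref{prop:deg phi = 1/2g q(J(phi))} with the structure of $U$ furnished by Proposition \ref{prop:structure of J(phi); phi:X->Y}. By the former, for every $\varphi \in \Mor_K(X,Y)$ one has $\deg \varphi = \frac{1}{2} |J(\varphi)|^2$, and $\varphi$ is non-constant precisely when $J(\varphi) \neq 0$ (as already noted immediately after Proposition \ref{prop:deg phi = 1/2g q(J(phi))}). Unfolding the definition of $\mu_K(X,Y)$ therefore gives the central equality $\mu_K(X,Y) = \frac{1}{2} (\min\{|u|\,;\, u \in U \setminus \{0\}\})^2$ right away; the hypothesis that there is a non-constant $K$-morphism guarantees $U \setminus \{0\} \neq \varnothing$. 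The lower bound $\frac{1}{2}\lambda(\Omega)^2 \le \mu_K(X,Y)$ then follows from the inclusion $U \setminus \{0\} \subseteq \Omega \setminus \{0\}$ and the definition of $\lambda(\Omega)$. This already handles the first displayed line of the corollary.

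For the upper bounds I would invoke Proposition \ref{prop:structure of J(phi); phi:X->Y}, which writes $U$ as a coset $u_0 + H$ of a subgroup $H$ of $\Omega$ containing $p(X)\Omega$, with $0 \in U$ if and only if $Y(K) \neq \varnothing$. In the case $X(K) \neq \varnothing$ the proposition gives $p(X) = 1$ and $U = \Omega$, so $\min\{|u|\,;\, u \in U \setminus \{0\}\} = \lambda(\Omega)$ and one has equality with the lower bound. In the case $Y(K) \neq \varnothing$ one has $U = H$, and choosing any nonzero $t \in \Omega$ with $|t| = \lambda(\Omega)$ the element $p(X) t \in p(X)\Omega \subseteq H = U$ is nonzero of norm $p(X) \lambda(\Omega)$, which yields $\mu_K(X,Y) \le \frac{1}{2} p(X)^2 \lambda(\Omega)^2$.

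The remaining case $Y(K) = \varnothing$ is the one that actually uses the covering radius. Here $0 \notin U$ and $U = u_0 + H$ for some $u_0 \neq 0$. Set $v = -u_0/p(X)$ in the real vector space $\mbbR \otimes_{\mbbZ} \Omega$; the definition of $R(\Omega)$ supplies $t \in \Omega$ with $|t - v| \le R(\Omega)$, and the element $u := u_0 + p(X) t \in u_0 + p(X)\Omega \subseteq U$ then satisfies $|u| = p(X) |t - v| \le p(X) R(\Omega)$. Because $0 \notin U$ this $u$ is automatically nonzero, so $\mu_K(X,Y) \le \frac{1}{2} p(X)^2 R(\Omega)^2$. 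I do not anticipate a real obstacle: the proof is pure bookkeeping around the two preceding propositions, and the only mild point to watch is extracting a \emph{nonzero} lattice element in each upper-bound step, which is immediate in the first two cases and forced by the hypothesis $Y(K) = \varnothing$ in the third.
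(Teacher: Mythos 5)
Your proposal is correct and follows essentially the same route as the paper: the equality and lower bound from Proposition \ref{prop:deg phi = 1/2g q(J(phi))} together with the coset structure of $U$ from Proposition \ref{prop:structure of J(phi); phi:X->Y}, the $\lambda(\Omega)$ bound via $p(X)\omega \in H = U$ when $Y(K) \neq \varnothing$, and the covering-radius argument (your $u = u_0 + p(X)t$ is the paper's $u = u_0 - p(X)\omega$ up to a sign convention) with nonvanishing of $u$ forced by $0 \notin U$. No gaps.
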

\begin{proof}
The first inequality and equation follow immediately from Propositions \ref{prop:deg phi = 1/2g q(J(phi))} and \ref{prop:structure of J(phi); phi:X->Y}.
Suppose that $Y(K) = \varnothing$, and choose a $u_0 \in U$.
Then there is a $\omega \in \Omega$ such that $|\omega - u_0/p(X)| \le R(\Omega)$, so $|u| \le p(x) R(\Omega)$ for $u = u_0 - p(X) \omega$.
Here $p(X) \omega \in H$, so $u \in U$, and $u \neq 0$, because $Y(K) = \varnothing$.
Thus $\mu_K(X,Y) \le \frac{1}{2} |u|^2 \le \frac{1}{2} p(X)^2 R(\Omega)^2$.

Now suppose that $Y(K) \neq \varnothing$, so $U = H$.
Take an $\omega \in \Omega \setminus \{0\}$ such that $|\omega| = \lambda(\Omega)$, then $u = p(X) \omega \in U \setminus \{0\}$, so $\mu_K(X,Y) \le \frac{1}{2} |u|^2 = \frac{1}{2} p(X)^2 \lambda(\Omega)^2$.
Finally, if $X(K) \neq \varnothing$, then $p(X) = 1$ and $U = H = \Omega$, so $\mu_K(X,Y) = \frac{1}{2} \lambda(\Omega)^2$.
\end{proof}

To get an upper bound for $\mu_K(X,Y)$ using Corollary \ref{cor:mu(X,Y)=1/2 min(|f|)^2}, it is enough to bound $\lambda(\Omega)$ and $R(\Omega)$ from above.
These upper bounds will depend on $\vol(\Omega)$.
That is why we calculate $\vol(\Omega)$ in the next section.

\section{Covolume calculation} \label{section:covolume-calculation}

Let $(J(X), \mcalL_{J(X)})$ and $(E, \mcalL_E)$ be the Jacobian variety of $X$ and the elliptic curve $E = J(Y)$ with their canonical polarizations.
Let $r$ be the largest nonnegative integer such that there is a $K$-morphism $\alpha \colon E^r \to J(X)$ with finite kernel.
Clearly $0 \le r \le \dim_K(J(X)) = g(X)$.
Let $A = \alpha(E^r)$ and let $B = A^{\perp}$ be the orthogonal complement of $A$ in $J(X)$ with respect to $\mcalL_{J(X)}$ (see e.g., \cite[p.\ 125]{BirLa} for the definition of the orthogonal complement).
Let $\iota_A \colon A \to J(X)$ and $\iota_B \colon B \to J(X)$ be the canonical embeddings.

We claim that here $A$ and $B$ are uniquely determined, i.e., they do not depend on $\alpha$.
To see this, take another $K$-morphism $\alpha' \colon E^r \to J(X)$ with finite kernel.
Let $\varphi \colon A \times B \to J(X)$, $(a,b) \mapsto a+b$, then $\mu$ is a $K$-isogeny, so there is a $K$-isogeny $\psi \colon J(X) \to A \times B$ and a positive integer $N$ such that $\varphi \circ \psi = [N]_{J(X)}$ and $\psi \circ \varphi = [N]_{A \times B}$ (see \cite[Proposition 1.2.6]{BirLa} or \cite[p.\ 169]{Mum} or \cite[(5.12)]{EMvdG}).
Let $\iota_1 \colon A \to A \times B$, $\iota_2 \colon B \to A \times B$, $\pi_1 \colon A \times B \to A$ and $\pi_2 \colon A \times B \to B$ be the canonical morphisms.
The maximality of $r$ implies that $\Hom_K(E,B) = \{0\}$, and then $\Hom_K(B,E) = 0$ holds too.
Hence $\pi_B \circ \psi \circ \alpha' = 0$, therefore $\psi \circ \alpha' = \iota_1 \circ \beta$ for some $\beta \in \Hom_K(E^r,A)$.
Then
\[
[N]_{J(X)} \circ \alpha' = \varphi \circ \psi \circ \alpha' = \varphi \circ \iota_1 \circ \beta = \iota_A \circ \beta.
\]
Since $[N]_{J(X)}$ is surjective,
\[
\alpha'(E^r) = ([N]_{J(X)} \circ \alpha')(E^r) = \iota_A(\beta(E^r)) \subseteq A = \alpha(E^r).
\]
Exchanging the roles of $\alpha$ and $\alpha'$, we get that $\alpha(E^r) \subseteq \alpha'(E^r)$, so indeed $\alpha'(E^r) = A$.

For any two abelian $K$-varieties $A_1, A_2$, let $\rho_{A_1, A_2}$ denote the smallest positive integer $d$ such that there is a $K$-isogeny $A_1 \to A_2$ of degree $d$.
Let $\mcalF_A = \iota_A^* \mcalL_{J(X)}$, then $(A, \mcalF_A)$ is a polarized abelian variety by part \eqref{item:prop:basics, finite ker} of Proposition \ref{prop:basics}.
Let $\mcalO = \End_K(E)$, this is either $\mbbZ$ or an order in an imaginary quadratic number field, thus $\mcalO$ is an integral domain.
If $R$ is an integral domain with fraction field $F$, and $M$ is an $R$-module, then we define the rank of $M$ over $R$ as $\rank_R(M) := \dim_F(M \otimes_R F)$.
Let $\Omega = \Hom_K(J(X),E)$, then the Rosati involution defines an isometry $\Omega \cong \Hom_K(E,J(X))$.
Note that Rosati involution is a conjugate-linear isomorphism of $\mcalO$-modules.
Furthermore, by Lemma \ref{lemma:|gf| and |f|} we have a canonical isometry $\Hom_K(E,J(X)) \cong \Hom_K(E, A)$, where the Rosati metric on $\Hom_K(E,A)$ is defined using the polarization $\mcalF_A$ on $A$.
There are isogenies $\alpha \colon E^r \to A$, $\beta \colon A \to E^r$ and a positive integer $N$ such that $\beta \circ \alpha = [N]_{E^r}$ and $\alpha \circ \beta = [N]_A$.
These induce $\mcalO$-module homomorphisms
\[
\Phi_{\alpha} = (\alpha \circ \cdot) \colon \Hom_K(E,E^r) \to \Hom_K(E, A)
\]
and
\[
\Phi_{\beta} = (\beta \circ \cdot) \colon \Hom_K(E,A) \to \Hom_K(E,E^r).
\]
Then $\Phi_{\alpha} \circ \Phi_{\beta} = N \identity_{\Hom_K(E,A)}$ and $\Phi_{\beta} \circ \Phi_{\alpha} = N \identity_{\Hom_K(E,E^r)}$, therefore
\[
\rank_{\mcalO}(\Omega) = \rank_{\mcalO}(\Hom_K(E,A)) = \rank_{\mcalO}(\Hom_K(E,E^r)) = \rank_{\mcalO}(\mcalO^r) = r.
\]
We assume that there is a non-constant $K$-morphism from $X$ to $Y$, so $\Omega \neq \{0\}$, hence $r \ge 1$.

Note that $\Omega$ is a finitely generated, torsion free $\mcalO$-module of rank $r$.
Define for such an $\mcalO$-module $\Omega$ the quantity
\[
c(\Omega) = \min \left\{\left|\Omega/\left(\sum_{i=1}^r \mcalO \alpha_i\right)\right| ; \, \alpha_1, \dotsc, \alpha_r \in \Omega \right\}.
\]
One can easily check that there is an $\mcalO$-submodule $\Omega' \subseteq \Omega$ such that $\Omega' \cong \mcalO^r$, and there is an $s \in \Omega \setminus \{0\}$ such that $s \Omega \subseteq \Omega'$.
Then $c(\Omega) \le |\Omega/\Omega'| \le |\mcalO^r/(s \mcalO^r)| = |\mcalO/(s \mcalO)|^r < \infty$.
So $c(\Omega)$ is a positive integer.
If $\mcalO = \mbbZ$, or more generally, if $\mcalO$ is a principal ideal domain, then $\Omega \cong \mcalO^r$, hence $c(\Omega) = 1$.

\begin{proposition} \label{prop:covolume}
If $\mcalO \cong \mbbZ$, then
\[
\vol(\Omega) = 2^{r/2} \sqrt{\chi(A, \mcalF_A) \rho_{E^r, A}},
\]
while if $\mcalO \not\cong \mbbZ$, then
\[
\vol(\Omega) = \frac{1}{c_{\mcalO}(\Omega)} |D(\mcalO)|^{r/2} \chi(A, \mcalF_A) \rho_{E^r, A}.
\]
Furthermore
\begin{align*}
\chi(A, \mcalF_A) \rho_{E^r, A} &= \min \{\chi(E^r, \gamma^* \mcalF_A) ; \, \gamma \in \Hom_K(E^r, A) \textrm{ is a $K$-isogeny} \} \\
&= \min \{\chi(E^r, \gamma^* \mcalL_{J(X)}) ; \, \gamma \in \Hom_K(E^r,J(X)), \, \Ker \gamma \textrm{ is finite}\} \\
&= \sqrt{\rho_{E^r \times B, J(X)} \rho_{E^r, A}} \le \rho_{E^r \times B, J(X)}.
\end{align*}
\end{proposition}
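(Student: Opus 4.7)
The plan is to identify $\Omega$ with $L = \Hom_K(E, A)$ via the Rosati isometry (cf.\ Lemma~\ref{lemma:|gf| and |f|}), and to compute $\vol(L)$ by choosing an isogeny $\alpha \colon E^r \to A$, which yields a finite-index sublattice $L_\alpha = \alpha \circ \Hom_K(E, E^r) = \sum_{i=1}^r \mcalO \alpha_i \subseteq L$ (where $\alpha_i = \alpha \circ \iota_i$ and $\iota_i \colon E \to E^r$ is the $i$-th inclusion); then $\vol(L) = \vol(L_\alpha)/[L:L_\alpha]$. There is a bijection between such isogenies $\alpha$ and tuples $(\alpha_1, \dotsc, \alpha_r) \in L^r$ generating a finite-index $\mcalO$-submodule, under which $[L:L_\alpha]$ equals $|L/\sum_i \mcalO \alpha_i|$, so in particular $\min_\alpha [L:L_\alpha] = c_{\mcalO}(\Omega)$.

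For $\vol(L_\alpha)$, using Lemma~\ref{lemma:|gf| and |f|} to transport the metric and Lemma~\ref{lemma:matrix of phi_F} to produce the hermitian positive definite matrix $M_\alpha \in \mcalO^{r \times r}$ of $\phi_{\alpha^* \mcalF_A}$ (with $\det M_\alpha = \chi(E^r, \alpha^* \mcalF_A)$), the Rosati metric on $L_\alpha \cong \mcalO^r$ becomes the real quadratic form $y \mapsto 2 y^* M_\alpha y$. In the case $\mcalO = \mbbZ$ this directly gives $\vol(L_\alpha) = 2^{r/2}\sqrt{\chi(E^r, \alpha^* \mcalF_A)}$; for $\mcalO$ imaginary quadratic, writing $\mcalO = \mbbZ[\tau]$ and elements of $\mcalO^r$ as $u + \tau v$ with $u,v \in \mbbZ^r$, Lemma~\ref{lemma:matrix lemma} together with $2|\Impart \tau| = \sqrt{|D(\mcalO)|}$ yields $\vol(L_\alpha) = |D(\mcalO)|^{r/2} \chi(E^r, \alpha^* \mcalF_A)$. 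Combined with $\chi(E^r, \alpha^* \mcalF_A) = \deg(\alpha) \chi(A, \mcalF_A)$ (Proposition~\ref{prop:basics}\eqref{item:prop:basics, isogeny}) and the $\alpha$-independence of $\vol(\Omega) = \vol(L_\alpha)/[L:L_\alpha]$, minimizing $\deg \alpha$ is equivalent to minimizing $[L:L_\alpha]$; so taking $\alpha$ with $\deg \alpha = \rho_{E^r, A}$ forces $[L:L_\alpha] = c_{\mcalO}(\Omega)$ (which is $1$ when $\mcalO = \mbbZ$), yielding both claimed formulas for $\vol(\Omega)$.

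For the ``furthermore'' chain, the first equality is immediate from $\chi(E^r, \gamma^* \mcalF_A) = \deg(\gamma)\chi(A, \mcalF_A)$; the second follows because any $\gamma \in \Hom_K(E^r, J(X))$ with finite kernel has image of dimension $r$ isogenous to $E^r$, which by the maximality of $r$ must be $A$, so $\gamma = \iota_A \circ \gamma'$ and $\gamma^* \mcalL_{J(X)} = (\gamma')^* \mcalF_A$. The substantive third equality, via $\chi(A, \mcalF_A)^2 = |A \cap B| = \deg \varphi$ (where $\varphi \colon A \times B \to J(X)$ is addition), is equivalent to $\rho_{E^r \times B, J(X)} = \deg \varphi \cdot \rho_{E^r, A}$; the final inequality then reduces to $\rho_{E^r, A} \le \rho_{E^r \times B, J(X)}$, which is clear because the restriction to $E^r$ of any isogeny $E^r \times B \to J(X)$ is an isogeny onto $A$. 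The upper bound in the substantive equality is realized by $\varphi \circ (\alpha \times \identity_B)$ for $\alpha$ of degree $\rho_{E^r, A}$.

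The main obstacle is the matching lower bound $\rho_{E^r \times B, J(X)} \ge \deg \varphi \cdot \rho_{E^r, A}$. Given any isogeny $\delta \colon E^r \times B \to J(X)$, restricting to the two factors produces isogenies $\alpha' \colon E^r \to A$ (of degree $d_1 \ge \rho_{E^r, A}$) and $\beta \colon B \to B'$ onto some abelian subvariety $B' \subseteq J(X)$ of dimension $g(X) - r$, and a direct kernel computation yields $\deg \delta = |A \cap B'| \cdot d_1 \cdot \deg \beta$. I would then apply the auxiliary claim $\rho_{A \times B, J(X)} = |A \cap B|$ to the isogeny $A \times B \to J(X)$, $(a, b) \mapsto a + \iota_{B'}(\beta(b))$ (of degree $|A \cap B'| \deg \beta$), to deduce $|A \cap B'| \deg \beta \ge |A \cap B|$. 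The auxiliary claim follows because $\Hom_K(A, B) = 0 = \Hom_K(B, A)$ (forced by the maximality of $r$ together with $A \sim E^r$) ensures that every isogeny $\delta' \colon A \times B \to J(X)$ decomposes as $(a,b) \mapsto \iota_A(f(a)) + \iota_B(g(b))$ with $f \in \End_K(A), g \in \End_K(B)$, whence $\deg \delta' = |A \cap B| \deg(f)\deg(g) \ge |A \cap B|$.
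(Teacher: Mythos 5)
Your computation of the two covolume formulas is essentially the paper's own argument: the same isometry $\Omega \cong \Hom_K(E,A)$, the same finite-index sublattices $\alpha \circ \Hom_K(E,E^r)$ indexed by isogenies $\alpha \colon E^r \to A$, the same Gram-matrix evaluation via Lemma \ref{lemma:matrix of phi_F} and Lemma \ref{lemma:matrix lemma}, and the same simultaneous minimization identifying $\min_\alpha [L:L_\alpha]$ with $c_{\mcalO}(\Omega)$ and $\min_\alpha \deg\alpha$ with $\rho_{E^r,A}$. Where you diverge is in the last chain of equalities. The paper observes that every $K$-morphism $E^r \times B \to J(X)$ factors through the addition isogeny $\varphi \colon A \times B \to J(X)$, which gives $\rho_{E^r \times B, J(X)} = \rho_{E^r,A}\deg\varphi$ in one line, and then computes $\deg\varphi = \chi(A,\mcalF_A)^2$ from $\varphi^*\mcalL_{J(X)} \cong \mcalF_A \boxtimes \iota_B^*\mcalL_{J(X)}$, $\chi(J(X),\mcalL_{J(X)})=1$, and the symmetry $\chi(A,\mcalF_A)=\chi(B,\iota_B^*\mcalL_{J(X)})$. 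Your kernel-counting route through the auxiliary claim $\rho_{A\times B, J(X)} = |A\cap B|$ is correct but reproves by hand what the factorization gives directly; in particular your subvariety $B'$ is forced to equal $B$ by $\Hom_K(B,A)=0$, the very argument you invoke for the auxiliary claim, so the extra generality buys nothing.

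The one point that is not self-contained is the assertion $\chi(A,\mcalF_A)^2 = |A\cap B|$. The half $\deg\varphi = |A\cap B|$ is elementary, but $\deg\varphi = \chi(A,\mcalF_A)^2$ is a genuine theorem about complementary abelian subvarieties of a principally polarized abelian variety: it is exactly where $\varphi^*\mcalL_{J(X)} \cong \mcalF_A \boxtimes \iota_B^*\mcalL_{J(X)}$ and the equality $\chi(A,\mcalF_A)=\chi(B,\iota_B^*\mcalL_{J(X)})$ enter (the paper cites \cite[Corollary 5.3.6 and Corollary 12.1.5]{BirLa} for these). Without this input the third equality of the ``furthermore'' chain does not follow from your argument, so you must either cite this fact or prove it; everything else in your proposal goes through.
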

\begin{proof}
We start by proving the last statement.
If $\widetilde{\gamma} \colon E^r \to J(X)$ is a $K$-morphism with finite kernel, then the image of $\widetilde{\gamma}$ is $A$, so $\widetilde{\gamma}$ induces a $K$-isogeny $\gamma \colon E^r \to A$ such that $\widetilde{\gamma} = \iota_A \circ \gamma$.
Conversely, every $K$-isogeny $\gamma$ induces a $K$-morphism $\widetilde{\gamma} = \iota_A \circ \gamma \colon E^r \to J(X)$ with finite kernel.
Here $\gamma^* \mcalL_{J(X)} = \widetilde{\gamma}^* \mcalF_A$, so the penultimate equation is clear.
Using parts \eqref{item:prop:basics, isogeny}, \eqref{item:prop:basics, h^i} and \eqref{item:prop:basics, finite ker} of Proposition \ref{prop:basics}, we get that $\mcalF_A$ and $\gamma^* \mcalF_A$ are ample, $\chi(A, \mcalF_A) > 0$ and $\chi(E^r, \gamma^* \mcalF_A) = (\deg \gamma) \chi(A, \mcalF_A)$.
So $\deg \gamma$ is minimal if and only if $\chi(E^r, \gamma^* \mcalF_A)$ is minimal, therefore
\[
\chi(A, \mcalF_A) \rho_{E^r, A} = \min \{\chi(E^r, \gamma^* \mcalF_A) ; \, \gamma \in \Hom_K(E^r, A) \textrm{ is a $K$-isogeny} \}.
\]
It is easy to see that every $K$-morphism $E^r \times B \to J(X)$ factors through the $K$-isogeny $\varphi \colon A \times B \to J(X)$, $(a,b) \mapsto a+b$.
So
\[
\rho_{E^r \times B, J(X)} = \rho_{E^r, A} \cdot \deg \varphi \ge \rho_{E^r, A}.
\]
Note that \cite[Corollary 5.3.6]{BirLa} implies that $\varphi^* \mcalL_{J(X)} \cong \mcalF_A \boxtimes (\iota_B^* \mcalL_{J(X)})$ on $A \times B$ (here we use the notation $\boxtimes$ of \cite[p.\ 2065]{GauRe}), so using part \eqref{item:prop:basics, isogeny} of Proposition \ref{prop:basics} we obtain
\[
\deg \varphi = \chi(J(X), \mcalL_{J(X)}) \deg \varphi = \chi(A \times B, \varphi^* \mcalL_{J(X)}) = \chi(A, \mcalF_A) \chi(B, \iota_B^* \mcalL_{J(X)}).
\]
Here $\chi(A, \mcalF_A) = \chi(B, \iota_B^* \mcalL_{J(X)})$ (see Corollary 12.1.5 in \cite{BirLa} or Lemma 1.3 in \cite{MW:Periods}), so $\deg \varphi = \chi(A, \mcalF_A)^2$.
Hence
\[
\rho_{E^r \times B, J(X)} \ge \sqrt{\rho_{E^r \times B, J(X)} \rho_{E^r, A}} = \rho_{E^r, A} \sqrt{\deg \varphi} = \chi(A, \mcalF_A) \rho_{E^r, A}.
\]

The isometry $\Omega \cong \Hom_K(E,A)$ shows that $\vol(\Omega) = \vol(\Hom_K(E,A))$.
Note that $\Hom_K(E,A)$ is a torsion-free $\mcalO$-module of rank $r$.
Let $\gamma = (\gamma_1, \dotsc, \gamma_r) \colon E^r \to A$ be a $K$-morphism, where $\gamma_1, \dotsc, \gamma_r \in \Hom_K(E,A)$.
We claim that $\gamma$ is an isogeny if and only if $\gamma_1, \dotsc, \gamma_r$ are $\mcalO$-linearly independent.
There are $K$-isogenies $\alpha \colon E^r \to A$, $\beta \colon A \to E^r$, and a positive integer $N$ such that $\beta \circ \alpha = [N]_{E^r}$ and $\alpha \circ \beta = [N]_A$.
Then $\gamma$ is an isogeny if and only if $\beta \circ \gamma \in \End_K(E^r) \cong \mcalO^{r \times r}$ is an isogeny.
Let $P \in \mcalO^{r \times r}$ be the matrix corresponding to $\beta \circ \gamma$.
Then $\beta \circ \gamma$ is an isogeny if and only if $\det P \neq 0$, which is equivalent to the $\mcalO$-linear independence of $\beta \circ \gamma_1, \dotsc, \beta \circ \gamma_r$.
Using $\alpha \circ \beta = [N]_A$, we see that this is equivalent to the $\mcalO$-linear independence of $\gamma_1, \dotsc, \gamma_r$, hence our claim is true.
Furthermore, it is easy to see that if $\gamma$ is an isogeny, then $\bigoplus_{i=1}^r \mcalO \gamma_i$ is a finite index subgroup of $\Hom_K(E,A)$.
Let $d_{\gamma} = |\Hom_K(E,A)/\bigoplus_{i=1}^r \mcalO \gamma_i|$.

The embedding
\[
(\Hom_K(E,E^r), q_{\mcalL_E, \gamma^* \mcalF_A}) \xrightarrow{\gamma \circ \cdot} (\Hom_K(E,A), q_{\mcalL_E, \mcalF_A})
\]
respects the metrics, and the image has index $d_{\gamma}$, so
\[
\vol(\Hom_K(E,E^r), q_{\mcalL_E, \gamma^* \mcalF_A}) = d_{\gamma} \vol(\Hom_K(E,A), q_{\mcalL_E, \mcalF_A}) = d_{\gamma} \vol(\Omega).
\]
By calculating the left hand side of the above equation, we will show that
\[
d_{\gamma} \vol(\Omega) = 2^{r/2} \sqrt{\chi(E^r, \gamma^* \mcalF_A)}
\]
if $\mcalO \cong \mbbZ$, and
\[
d_{\gamma} \vol(\Omega) = |D(\mcalO)|^{r/2} \chi(E^r, \gamma^* \mcalF_A)
\]
otherwise.
Taking the $\gamma$ which minimizes both sides, we get the statement of the Proposition.

Let $M \in \mcalO^{r \times r}$ denote the matrix corresponding to the polarization $\phi_{\gamma^* \mcalF_A} \in \End_K(E^r)$.
This is a hermitian positive definite matrix with $\det M = \chi(E^r, \gamma^* \mcalF_A)$ by Lemma \ref{lemma:matrix of phi_F}.
Let $x = (x_1, \dotsc, x_r)^T \in \Hom_K(E,E^r) \cong \mcalO^r$, where $x_i \in \mcalO$.
We will show that $\frac{1}{2} |x|_{(\mcalL_E, \gamma^* \mcalF_A)}^2 = x^* M x$, where $x^*$ denotes the conjugate transpose of $x$.
First note that
\[
|x|_{(\mcalL_E, \gamma^* \mcalF_A)} = |x \circ [1]_E|_{(\mcalL_E, \gamma^* \mcalF_A)} = |[1]_E|_{(\mcalL_E, x^* \gamma^* \mcalF_A)}
\]
by Lemma \ref{lemma:|gf| and |f|}.
By Proposition \ref{prop:basics2}, there is a positive integer $k$ such that $\phi_{x^* \gamma^* \mcalF_A} = k \phi_{\mcalL_E}$.
Using the definition of the Rosati norm and Proposition \ref{prop:endoms of elliptic curves}, we get that
\[
\frac{1}{2} |x|^2_{(\mcalL_E, \gamma^* \mcalF_A)} = \frac{1}{2} |[1]_E|^2_{(\mcalL_E, x^* \gamma^* \mcalF_A)} = \frac{k}{2} |[1]_E|^2_{(\mcalL_E, \mcalL_E)} = k.
\]
Using Lemma \ref{lemma:phi_f^*F = f' phi f}, we get that $x^* M x \in \mcalO$ corresponds to
\[
\widehat{x} \circ \phi_{\gamma^* \mcalF_A} \circ x = \phi_{x^* \gamma^* \mcalF_A} = k \phi_{\mcalL_E} \in \End_K(E,E),
\]
so $x^* M x = k = \frac{1}{2} |x|^2_{(\mcalL_E, \gamma^* \mcalF_A)}$.
Hence $|x|^2_{(\mcalL_E, \gamma^* \mcalF_A)} = x^* (2M) x$.
So if $\mcalO \cong \mbbZ$, then on $\Hom_K(E,E^r) \cong \mbbZ^r$ the quadratic form $q_{\mcalL_E, \gamma^* \mcalF}$ is defined by the positive definite matrix $2M \in \mbbZ^{r \times r}$, hence
\[
\vol(\Hom_K(E,E^r), q_{\mcalL_E, \gamma^* \mcalF_A}) = \sqrt{\det(2M)} = \sqrt{2^r \det M} = 2^{r/2} \sqrt{\chi(E^r, \gamma^* \mcalF_A)}.
\]
Suppose that $\mcalO \cong \mbbZ + \mbbZ \tau$.
Let $e_1, \dotsc, e_r$ be the standard $\mcalO$-basis of $\Hom_K(E,E^r) \cong \mcalO^r$.
Then $e_1, \dotsc, e_r, \tau e_1, \dotsc, \tau e_r$ is a $\mbbZ$-basis of $\Hom_K(E,E^r) \cong \mbbZ^{2r}$.
If $z = (z_1, \dotsc, z_{2r})^T \in \mbbZ^{2r}$ and
\[
x = (z_1 + z_{r+1} \tau, \dotsc, z_r + z_{2r} \tau) \in \mcalO^r \cong \Hom_K(E,E^r),
\]
then $\frac{1}{2} |x|^2_{(\mcalL_E, \gamma^* \mcalF_A)} = x^* M x = z^T N z$, where $N = \left( \begin{smallmatrix} \Repart(M) & \Repart(\tau M) \\ \Repart(\overline{\tau} M) & |\tau|^2 \Repart(M) \end{smallmatrix} \right) \in \mbbR^{2r \times 2r}$.
So $|x|^2_{(\mcalL_E, \gamma^* \mcalF_A)} = z^T (2N) z$, hence
\[
\vol(\Hom_K(E,E^r), q_{\mcalL_E, \gamma^* \mcalF_A}) = \sqrt{\det(2N)} = 2^r |\Impart \tau|^r \cdot |\det M|
\]
by Lemma \ref{lemma:matrix lemma}.
Since $|D(\mcalO)| = 4 |\Impart \tau|^2$ and $\det M = \chi(E^r, \gamma^* \mcalF_A) > 0$, we see that
\[
\vol(\Hom_K(E,E^r), q_{\mcalL_E, \gamma^* \mcalF_A}) = |D(\mcalO)|^{r/2} \chi(E^r, \gamma^* \mcalF_A).
\]
\end{proof}

\begin{remark} \label{remark:c(Omega)}
It is a natural question to ask how big $c(\Omega)$ can be.
The Class Index Lemma in \cite{MW:FactEst} gives the upper bound $c(\Omega) \le |D(\mcalO)|^{r/2}$.
\end{remark}

\section{Finishing the proof of the theorem}

In Proposition \ref{prop:covolume}, the discriminant $|D(\mcalO)|$ appears.
So to finish the proof, we first need an upper bound for $|D(\mcalO)|$.
Such a bound is given in the following proposition.

\begin{proposition} \label{prop:discr-bound}
Let $K$ be a number field, $E/K$ an elliptic curve with $\mcalO = \End_K(E) \cong \mbbZ + \mbbZ \tau$, and let $D(\mcalO) = -4 (\Impart \tau)^2$.
Then
\[
|D(\mcalO)| \le 500 [K:\mbbQ]^2 \max(h_F(E),1)^2.
\]
\end{proposition}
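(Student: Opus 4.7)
The plan is to split the proof into two steps: an elementary arithmetic reduction that converts the discriminant bound into a bound on the degree of \emph{some} non-integer endomorphism, and then an appeal to a dimension-one endomorphism/isogeny estimate to produce such an endomorphism.

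\textbf{Step 1 (arithmetic reduction).} Any $\mu \in \mcalO \setminus \mbbZ$ can be written uniquely as $\mu = a + b\tau$ with $a, b \in \mbbZ$ and $b \neq 0$. Then $\Impart \mu = b \Impart \tau$, so $|\Impart \mu| \ge \Impart \tau$, and hence
\[
|\mu|^2 \ge (\Impart \mu)^2 \ge (\Impart \tau)^2 = |D(\mcalO)|/4.
\]
By Proposition \ref{prop:endoms of elliptic curves} one has $\deg \mu = |\mu|^2$, which yields
\[
|D(\mcalO)| \le 4 \deg \mu \quad \text{for every } \mu \in \mcalO \setminus \mbbZ.
\]
So it suffices to exhibit some $\mu \in \mcalO \setminus \mbbZ$ of degree at most $125 [K:\mbbQ]^2 \max(h_F(E),1)^2$.

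\textbf{Step 2 (small non-integer endomorphism).} Here I would apply an endomorphism estimate from Gaudron--R\'emond \cite{GauRe} (or \cite{GauRePeriod}) specialized to $g=1$. Such an estimate asserts that the lattice $\End_K(E)$, equipped with the Rosati metric coming from the canonical principal polarization $\mcalM$, admits a $\mbbZ$-basis consisting of elements whose Rosati norm is bounded polynomially in $[K:\mbbQ]$ and $\max(h_F(E), 1)$. Since $\mcalO \neq \mbbZ$, at least one basis element must lie outside $\mbbZ$. By Proposition \ref{prop:endoms of elliptic curves}, the conversion $|c|_{(\mcalM,\mcalM)}^2 = 2|c|^2 = 2\deg c$ translates a Rosati-norm bound of the shape $|c|_{(\mcalM,\mcalM)} \le \sqrt{250}\,[K:\mbbQ]\max(h_F(E),1)$ into a degree bound $\deg c \le 125 [K:\mbbQ]^2 \max(h_F(E),1)^2$, which combined with Step 1 gives the claimed inequality.

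\textbf{Main obstacle.} The delicate part is Step 2: one must locate the correct sharpened dimension-one version of the endomorphism estimate, and track numerical constants carefully to arrive at $500$. The generic $\kappa(A)$-bound used elsewhere in this paper carries exponents of the form $2^{10} g^3$ and is hopelessly lossy here; one truly needs a dimension-one statement, where the tools of \cite{GauRePeriod} (period estimates, polarized isogenies on elliptic curves) give polynomial dependence on $h_F(E)$. A naive alternative via a Chowla--Selberg-type inequality $h_F(E) \gtrsim \log|D(\mcalO)|$ would only give an \emph{exponential} bound $|D(\mcalO)| \lesssim e^{c\,h_F(E)}$, which is far weaker than the polynomial bound stated in the proposition; so the proof genuinely relies on the isogeny-theoretic input rather than on transcendence/special-value arguments.
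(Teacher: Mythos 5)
Your Step 1 is correct and cleanly reduces the claim to producing one small non-integer endomorphism: for $\mu = a + b\tau$ with $b \neq 0$ one has $\deg\mu = |\mu|^2 \ge (\Impart\mu)^2 \ge (\Impart\tau)^2 = |D(\mcalO)|/4$. But observe that this inequality is essentially sharp: choosing $\tau$ reduced (so $|\Repart\tau| \le \tfrac12$), the element $\tau$ itself lies in $\mcalO \setminus \mbbZ$ and has degree $|\tau|^2 \le \tfrac14 + \tfrac14|D(\mcalO)|$. Hence the statement you need in Step 2 --- a non-integer endomorphism of degree at most $125\,[K:\mbbQ]^2\max(h_F(E),1)^2$ --- is, up to an additive constant, \emph{equivalent} to the proposition you are proving. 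The entire content of the proof therefore sits in Step 2, where you only gesture at ``an endomorphism estimate from Gaudron--R\'emond specialized to $g=1$'' without identifying the statement or verifying that it delivers the Rosati-norm bound $\sqrt{250}\,[K:\mbbQ]\max(h_F(E),1)$. No such estimate with that constant is quoted in this paper, and the explicit dimension-one isogeny/endomorphism bounds in \cite{GauRe} and \cite{GauRePeriod} carry much larger numerical constants (and typically a $\log[K:\mbbQ]$ inside the maximum), so the constant $500$ would not survive your route as written. This is a genuine gap, and you flag it yourself as the ``main obstacle'' without resolving it.

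The paper's proof takes a different and more elementary path that bypasses endomorphism estimates entirely. By CM theory (Theorem 11.1 and Proposition 13.2 of \cite{Cox}), $j(E)$ and $j(\mcalO)$ have the same minimal polynomial over $\mbbQ$, so there is an embedding $\sigma \colon K \hookrightarrow \mbbC$ with $j(\tau_\sigma) = \sigma(j(E)) = j(\mcalO) = j(\tau)$, where $E_\sigma \cong \mbbC/(\mbbZ + \mbbZ\tau_\sigma)$ and $\tau_\sigma$ lies in the standard fundamental domain; for this $\sigma$ one gets $\Impart\tau = \Impart\tau_\sigma$. Since all the $\Impart\tau_{\sigma'}$ are positive, $\Impart\tau_\sigma \le [K:\mbbQ]\,T$ where $T$ is their average over the embeddings, and $T \le 6.45\max\bigl(h_F(E) + \tfrac{\log\pi}{2}, 1\bigr)$ by \cite[Proposition 3.2 and Remarque 3.3]{GauRePeriod} --- an explicit analytic comparison of periods with the Faltings height, not an isogeny estimate. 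Your closing remark that a Chowla--Selberg-type lower bound would only give an exponential bound is on target, but the fix is the period estimate just described rather than an isogeny-theoretic input; if you wanted to salvage your strategy, proving that period bound and taking $\mu = \tau$ (reduced) as your small non-integer endomorphism would do it, at which point Step 1 is no longer needed.
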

\begin{proof}
Let $F = \mbbQ(\tau)$ be the fraction field of $\mcalO$, then $\mcalO$ is an order in $\mcalO_F$.
Let $T = \frac{1}{[K:\mbbQ]} \sum_{\sigma \colon K \hookrightarrow \mbbC} \Impart \tau_{\sigma}$, where $E_{\sigma} \cong \mbbC/(\mbbZ + \mbbZ \tau_{\sigma})$, and $\tau_{\sigma}$ is in the closed fundamental domain $R = \{z \in \mbbC; \, \Impart z > 0, \, |\Repart z| \le \frac{1}{2}, \, |z| \ge 1 \}$.
Here $\Impart \tau_{\sigma}$ is determined by $j(\tau_{\sigma}) = j(E_{\mbbC}) = \sigma(j(E))$, where $j(E) \in K$.
We can choose $\tau$ such that $\tau \in R$.
By Theorem 11.1 and Proposition 13.2 of \cite{Cox}, $j(E)$ and $j(\mcalO)$ have the same minimal polynomial over $\mbbQ$.
Thus there is a $\sigma \colon K \hookrightarrow \mbbC$ such that $j(\tau_{\sigma}) = \sigma(j(E)) = j(\mcalO) = j(\tau)$.
Then
\[
\frac{1}{2} \sqrt{|D(\mcalO)|} = \Impart \tau = \Impart \tau_{\sigma} \le [K:\mbbQ] T \le 6.45 [K:\mbbQ] \max\left(h_F(E) + \frac{\log \pi}{2}, 1\right)
\]
by Proposition 3.2 and Remarque 3.3 of \cite{GauRePeriod}.
Here $\max\left(h_F(E) + \frac{\log \pi}{2}, 1\right) \le (1+\frac{\log \pi}{2}) \max(h_F(E),1)$, so statement follows from $(2 \cdot 6.45 \cdot (1+\frac{\log \pi}{2}))^2 < 500$.
\end{proof}

\begin{proof}[Proof of Theorem \ref{thm:Faltings height}]
Let $n = \rank_{\mbbZ} \Omega = r \rank_{\mbbZ} \mcalO \ge 1$.
We have $\lambda(\Omega) \le \sqrt{n} \vol(\Omega)^{1/n}$ and $\lambda(\Omega)^{n-1} \Lambda(\Omega) \le n^{n/2} \vol(\Omega)$ by \cite[Lemme 3.1]{GauRe}.
If $\omega \in \Omega \setminus \{0\}$, then $|\omega|^2 = \Trace(\omega \circ \omega^{\dag})$ is a positive integer (see \cite[Propri\'et\'e 2.2 (1)]{GauRe}), so $|\omega| \ge 1$.
Thus $\lambda(\Omega) \ge 1$, hence $\Lambda(\Omega) \le n^{n/2} \vol(\Omega)$.
Using $R(\Omega) \le \frac{\sqrt{n}}{2} \Lambda(\Omega)$, we get that $R(\Omega) \le \frac{1}{2} n^{(n+1)/2} \vol(\Omega)$.
Substituting in Corollary \ref{cor:mu(X,Y)=1/2 min(|f|)^2}, and using $n \le 2r \le 2g(X)$, we get that
\[
\mu_K(X,Y) \le p(X)^2 \cdot (2g(X))^{2g(X)+1} \vol(\Omega)^2.
\]

In Proposition \ref{prop:covolume}, we have $\rho_{E^r \times B, J(X)} \le \kappa(J(X))$ by \cite[Th\'eor\`eme 1.4.]{GauRe}, and $|D(\mcalO)| \le 500 [K:\mbbQ]^2 \max(h_F(E),1)^2$ by Proposition \ref{prop:discr-bound}.
By \cite[Corollaire 1.5]{GauRe}, $h_F(A) \le h_F(J(X)) + \frac{1}{2} \log \kappa(J(X))$, and there is an isogeny $E^r \to A$ of degree at most $\kappa(A) \le \kappa(J(X))$, so
\[
h_F(E) \le r h_F(E) = h_F(E^r) \le h_F(A) + \frac{1}{2} \log \kappa(J(X)) \le h_F(J(X)) + \log \kappa(J(X)).
\]
It is easy to check using the definition of $\kappa(J(X))$ that
\[
|D(\mcalO)| \le 500 [K:\mbbQ]^2 \max(h_F(J(X)) + \log \kappa(J(X)),1)^2 \le \kappa(J(X))^{\frac{1}{2g(X)}}.
\]
So if $\mcalO \cong \mbbZ$, then $\vol(\Omega)^2 \le 2^{g(X)} \kappa(J(X))$, hence
\[
\mu_K(X,Y)/p(X)^2 \le (2g(X))^{2g(X)+1} \cdot 2^{g(X)} \kappa(J(X)) \le \kappa(J(X))^3.
\]
If $\mcalO \not\cong \mbbZ$, then $\vol(\Omega)^2 \le |D(\mcalO)|^{g(X)} \kappa(J(X))^2 \le \kappa(J(X))^{5/2}$, so
\[
\mu_K(X,Y)/p(X)^2 \le (2g(X))^{2g(X)+1} \cdot \kappa(J(X))^{5/2} \le \kappa(J(X))^3.
\]
\end{proof}

\subsection*{Acknowledgements}
This paper has its origins in the author's Ph.D.\ studies under the supervision of Gisbert W\"ustholz at ETH Z\"urich.
Therefore the author thanks Gisbert W\"ustholz for introducing him to this field, and for all the helpful discussions.

\end{document}